\newcommand{\arrows}{\rightrightarrows}
\newcommand{\id}{\mathrm{id}}
\newcommand{\cD}{\mathcal{D}}
\newcommand{\cbrack}[2]{\llbracket #1, #2 \rrbracket}
\DeclareMathOperator{\Ann}{Ann}
\newtheorem{thm}{Theorem}[section]
\newtheorem{prop}[thm]{Proposition}
\newtheorem{lemma}[thm]{Lemma}
\newtheorem{cor}[thm]{Corollary}
\theoremstyle{definition}
\newtheorem{definition}[thm]{Definition}
\theoremstyle{remark}
\newtheorem{remark}[thm]{Remark}
\newtheorem{remarks}[thm]{Remarks}
\newtheorem{example}[thm]{Example}
\numberwithin{equation}{section}
\begin{document}

\title
{Constant Symplectic $2$-groupoids}
\author{Rajan Amit Mehta}
\address{Department of Mathematics \& Statistics\\
Smith College\\
44 College Lane\\
Northampton, MA 01063}
\email{rmehta@smith.edu}

\author{Xiang Tang}
\address{Department of Mathematics\\
Washington University in Saint Louis\\
One Brookings Drive\\
Saint Louis, Missouri, USA 63130}
\email{xtang@math.wustl.edu}

\begin{abstract}
We propose a definition of symplectic $2$-groupoid which includes integrations of Courant algebroids that have been recently constructed. We study in detail the simple but illustrative case of constant symplectic $2$-groupoids. We show that the constant symplectic $2$-groupoids are, up to equivalence, in one-to-one correspondence with a simple class of Courant algebroids that we call constant Courant algebroids. Furthermore, we find a correspondence between certain Dirac structures and Lagrangian sub-$2$-groupoids.
\end{abstract}

\subjclass[2010]{53D17, 58H05} 
\keywords{Dirac, symplectic, $2$-groupoids, Courant algebroids}

\maketitle

\section{Introduction}
In \cite{lwx:courant}, Liu, Weinstein, and Xu introduced the notion of a \emph{Courant algebroid}, axiomatizing the brackets studied by Courant and Weinstein \cite{cw:courant, courant:dirac} and Dorfman \cite{dorfman}. Recognizing the similarities and relationships between Lie algebroids and Courant algebroids, they asked whether there is a groupoid-like object that can be viewed as the ``integration'' of a Courant algebroid. 

In \cite{severa:some}, \v{S}evera outlined a construction by which, given a Courant algebroid, one could formally produce a $2$-groupoid as a moduli space of maps of dg-manifolds. The space of $2$-simplices of this $2$-groupoid possesses a symplectic form, suggesting that the integration of a Courant algebroid should be called a \emph{symplectic $2$-groupoid}. In the case of an exact Courant algebroid, the symplectic $2$-groupoid arising from \v{S}evera's construction (the \emph{Liu-Weinstein-Xu $2$-groupoid}) was explicitly described and shown to be smooth in \cite{mt:exact}.

In all but the most trivial cases, the $2$-groupoids arising from \v{S}evera's construction are infinite-dimensional, so it is reasonable to look for finite-dimensional models. In different (but overlapping) special cases, such models were independently found in \cite{ls:integration, mt:double, sz:integration}, each with its own shortcoming. In \cite{sz:integration}, there is no $2$-form constructed on the integration. In \cite{ls:integration}, the integration has a symplectic structure but is only defined locally\footnote{Recently, \v{Severa} and \v{S}ira\v{n} \cite{severa-siran} described a general construction for local integration and showed that the local integrations can be glued together up to coherent homotopy.}. The construction in \cite{mt:double} is global, but the $2$-form constructed there is degenerate.

Based on the class of examples discovered there, the authors in \cite{mt:double} suggested a definition of symplectic $2$-groupoid where the $2$-form is allowed to be degenerate, but where the degeneracy is controlled in a certain way by the simplicial structure. However, the definition given there fails to serve as a good general definition. Specifically, \cite[Definition 6.7]{mt:double} implies that when the $2$-form is genuinely nondegenerate at a point $x$ of the unit space $X_0$, the  tangent space of $X_1$ at $x$ is isomorphic to the tangent space of $X_0$ at $x$, which forces $X_1$ to have the same dimension of $X_0$.  This excludes too many interesting examples, including the case of constant symplectic $2$-groupoids to be presented in this paper. This observation suggests that the definition requires revision. 

In Definition \ref{dfn:symplectic2groupoid}, we give a new definition of symplectic $2$-groupoid which still includes the examples constructed in \cite{mt:double} (see Section \ref{sec:double}). This definition is partly inspired by the notion of a shifted symplectic structure \cite{ptvv:derived}, and we are particularly indebted to Getzler \cite{getzler:slides}, who recast many of those ideas in the concrete language of simplicial manifolds. However, we stress that our definition is in several ways more strict than the one coming from \cite{ptvv:derived}. Specifically, our definition only includes a $2$-form on the space of $2$-simplices $X_2$, and we require this $2$-form to be closed on the nose. Additionally, our nondegeneracy requirement is more strict. 

The strictness of our definition reflects the fact that the solution to the integration problem will not be Morita invariant. For example, the notion of symplectic groupoid \cite{cdw} is not Morita invariant, and this strictness is necessary in order to obtain a Lie-theoretic correspondence with Poisson manifolds. On the other hand, the notion of $1$-shifted symplectic structure agrees with Xu's \cite{xu-quasi} weaker notion of \emph{quasi-symplectic groupoid}.

In the second half of this article, we consider the case of \emph{constant symplectic $2$-groupoids}, i.e.\ symplectic $2$-groupoids with a linear structure with respect to which the $2$-form is constant. Besides being a basic test case for the definition, constant symplectic $2$-groupoids should be useful for understanding the general case since they appear as first-order approximations of arbitrary symplectic $2$-groupoids. Specifically, given a symplectic $2$-groupoid $(X_\bullet, \omega)$ and a point $x \in X_0$, the ``tangent space at $x$'' $(T|_x X_\bullet, \omega_x)$ is a constant symplectic $2$-groupoid. In this sense, constant symplectic $2$-groupoids play the same role in the study of symplectic $2$-groupoids as symplectic vector spaces play in the study of symplectic manifolds.

We find that there is a relationship between constant symplectic $2$-groupoids and a certain class of Courant algebroids that we call \emph{constant Courant algebroids}. Specifically, the main results are as follows: 
\begin{enumerate}
\item (Theorem \ref{thm:correspondence}) There is a one-to-one correspondence between constant Courant algebroids and equivalence classes of constant symplectic $2$-groupoids.
\item  (Theorem \ref{thm:diracintegrate}) Under the above correspondence, constant Dirac structures are in one-to-one correspondence with wide linear Lagrangian sub-$2$-groupoids. 
\end{enumerate}
In other words, constant Courant algebroids integrate to constant symplectic $2$-groupoids, and constant Dirac structures integrate to certain Lagrangian sub-$2$-groupoids. These results provide evidence in support of our definition of symplectic $2$-groupoids as being the correct answer to the question posed by Liu, Weinstein, and Xu.

\subsection*{Organization of the paper}
In Section \ref{sec:symplectic2groupoid}, we define symplectic $2$-groupoids and show that the class of examples from \cite{mt:double} satisfies the definition. In Section \ref{sec:courant}, we study constant symplectic $2$-groupoids and find a minimal description of them in terms of linear algebra data.
In Section \ref{sec:constcourant}, we similarly study constant Courant algebroids and show that they are in correspondence with constant symplectic $2$-groupoids. Finally, in Section \ref{sec:dirac}, we consider linear Lagrangian sub-$2$-groupoids and describe the correspondence with constant Dirac structures.

\subsection*{Acknowledgments} 
We would like to thank Ezra Getzler for inspiring discussions and explanations about symplectic structures on differentiable $n$-stacks. We would like to thank  Damien Calaque for a discussion about the relation between integration of Courant algebroids and derived symplectic geometry. The research of the second author is partially supported by NSF grant DMS 1363250. 

\section{Symplectic $2$-groupoids}\label{sec:symplectic2groupoid}

\subsection{Lie $n$-groupoids and differential forms}
We start by recalling the definition of a Lie $n$-groupoid (see \cite{duskin, H, Z}).

\begin{definition} \label{dfn:simplicial}A \emph{simplicial manifold} is a sequence $X_\bullet = \{X_q\}$, $q \geq 0$, of manifolds equipped with surjective submersions $f_i^q: X_q \to X_{q-1}$ (called \emph{face maps}), $i=0,\dots,q$,  and embeddings $\sigma_i^q : X_q \to X_{q+1}$ (called \emph{degeneracy maps}), $i=0, \dots, q$,  such that
\begin{align}
f_i^{q-1}f_j^q &= f_{j-1}^{q-1}f_i^q, &i < j,\label{eqn:twoface}\\
\sigma_i^{q+1}\sigma_j^q &= \sigma_{j+1}^{q+1}\sigma_i^q, &i < j, \label{eqn:twodegen}\\
f_i^{q+1}\sigma_j^q &= \begin{cases}
\sigma_{j-1}^{q-1}f_i^q, & i< j,\\
\id, & i = j,j+1, \\
\sigma_j^{q-1}f_{i-1}^q, & i > j+1. \label{eqn:facedeg}\end{cases}
\end{align}
\end{definition}

For $q \geq 1$ and $0 \leq k \leq q$, recall that a $(q,k)$-\emph{horn} of $X_\bullet$ consists of a $q$-tuple $(x_0, \dots, x_{k-1}, x_{k+1}, \dots x_q)$, where $x_i \in X_{q-1}$, satisfying the \emph{horn compatibility equations}
\begin{equation}\label{eqn:horncompat}
     f_i^{q-1} x_j = f_{j-1}^{q-1} x_i
\end{equation}
for $i < j$.  The space of all $(q,k)$-horns is denoted $\Lambda_{q,k}X$.

The natural \emph{horn maps} $\lambda_{q,k}: X_q \to \Lambda_{q,k} X$ are defined as 
\[
 \lambda_{q,k}(x) = (f_0^q x, \dots, \widehat{f_k^q x}, \dots f_q^q x)
\]
 for $x \in X_q$. It is immediate from \eqref{eqn:twoface} that $\lambda_{q,k}(x)$ satisfies the horn compatibility equations \eqref{eqn:horncompat}; in fact, the purpose of the horn compatibility equations is to axiomatize the properties satisfied by $\lambda_{q,k}(x)$.

\begin{definition}\label{dfn:n}
     A \emph{Lie $n$-groupoid} is a simplicial manifold such that the horn maps $\lambda_{q,k}$ are
\begin{enumerate}
     \item surjective submersions for all $q \geq 1$, and
     \item diffeomorphisms for all $q > n$.
\end{enumerate}
\end{definition}

Given a simplicial manifold $X_\bullet$, we consider the bigraded space of differential forms $\Omega^\bullet(X_\bullet)$. There are two natural commuting differentials on $\Omega^\bullet(X_\bullet)$. One is the de Rham differential $d: \Omega^p(X_q)\to \Omega^{p+1}(X_q)$, and the other is the simplicial coboundary operator $\delta: \Omega^p(X_q)\to \Omega^p(X_{q+1})$, given by $\delta \alpha = \sum_{i=0}^{q+1} (-1)^i (f_i^{q+1})^* \alpha$. 

\begin{definition}\label{dfn:multandnormal}
A form $\alpha\in \Omega^p(X_q)$ is {\em multiplicative} if $\delta \alpha=0$. A form $\alpha \in \Omega^p(X_q)$ is {\em normalized} if $(\sigma_{q-1}^i)^*\alpha=0$ for all $i=0, \dots, q-1$.
\end{definition}
We note that the normalization condition holds vacuously in the case $q=0$. 

The space of normalized forms is denoted $\Omega^\bullet_\nu(X_\bullet)$. It is not hard to check that $\Omega^\bullet_\nu(X_\bullet)$ is closed under $d$ and $\delta$.

\subsection{The tangent complex}\label{subsec:tangent}

Let $X_\bullet$ be a simplicial manifold. For each $q > 0$, let $\sigma^q: X_0 \to X_q$ be defined as $\sigma^q = \sigma^{q-1}_0\cdots \sigma^0_0$. We can think of the image of $\sigma^q$ as being the ``unit space'' in $X_q$.

For $x\in X_0$ let $T_{x,q}X$ denote the tangent space $T_{\sigma^q(x)}X_q$. There is a natural simplicial structure on $T_{x,\bullet}X$ where the face and degeneracy maps are restrictions of the differentials of $f_i^q$ and $\sigma_i^q$. We note that $T_{x,\bullet}X$ is a simplicial vector space, in the sense that each $T_{x,q}X$ is a vector space and all of the face and degeneracy maps are linear.

There is a natural boundary map $\partial_q: T_{x,q}X \to T_{x,q-1}X$, given by
\[ \partial_q(v) = \sum_{i=0}^q (-1)^i (f_i^q)_* v.\]
It follows from \eqref{eqn:twoface} that $\partial^2 = 0$.

The Dold-Kan correspondence, c.f. \cite{go-ja:book}, associates a chain complex $\hat{T}_{x,\bullet}X$ to the simplicial vector space $T_{x, \bullet}X$. This chain complex can be explicitly described as follows. The \emph{normalized tangent space} $\hat{T}_{x,q}X$ is defined to be the quotient of $T_{x,q}X$ by the sum of the degenerate subspaces:
\[
\hat{T}_{x,q}X := \left. T_{x,q}X \middle/ \sum_{i=0}^{q-1} (\sigma^{q-1}_i)_* T_{x,q-1} X \right. .
\]
In particular, $\hat{T}_{x,0} = T_{x,0}$.

The boundary map $\partial$ descends to the normalized tangent spaces, so we have a chain complex
\[
\cdots\rightarrow \hat{T}_{x,q}X\xrightarrow{\partial} \hat{T}_{x,q-1}X \xrightarrow{\partial} \cdots \xrightarrow{\partial} \hat{T}_{x,1}X \xrightarrow{\partial} \hat{T}_{x,0}X,
\]
which is called the \emph{tangent complex} of the simplicial manifold $X_\bullet$ at $x\in X_0$. Taken together, the tangent complexes at every $x \in X_0$ form a complex of vector bundles over $X_0$, called the tangent complex of $X_\bullet$.

\begin{prop} \label{prop:n-complex}
If $X_\bullet$ is a Lie $n$-groupoid, then $\hat{T}_{x,q}$ is trivial for $q > n$. 
\end{prop}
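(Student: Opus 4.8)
The plan is to identify the normalized tangent space $\hat{T}_{x,q}X$, which is defined as a quotient by degenerate subspaces, with the kernel of a horn map differential, and then to use nondegeneracy of that differential. The bridge between the two descriptions is the Dold--Kan correspondence, which was already invoked above to produce the tangent complex. Explicitly, Dold--Kan provides that the composite
\[
\bigcap_{i=0}^{q-1} \ker\big((f_i^q)_* \colon T_{x,q}X \to T_{x,q-1}X\big) \hookrightarrow T_{x,q}X \twoheadrightarrow \hat{T}_{x,q}X
\]
is an isomorphism, where the first map is the inclusion of the Moore subcomplex and the second is the projection onto the quotient. Hence it suffices to prove that this intersection of face-map kernels vanishes whenever $q > n$.

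First I would observe that this intersection is exactly the kernel of a horn map differential. Since $\lambda_{q,q}(y) = (f_0^q y, \ldots, f_{q-1}^q y)$ omits the last face, its differential at $\sigma^q(x)$ sends $v \in T_{x,q}X$ to $((f_0^q)_* v, \ldots, (f_{q-1}^q)_* v)$. A vector $v$ lies in the kernel of this differential precisely when $(f_i^q)_* v = 0$ for every $i = 0, \ldots, q-1$, that is, precisely when $v \in \bigcap_{i=0}^{q-1} \ker((f_i^q)_*)$.

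Then I would invoke the defining property of a Lie $n$-groupoid: for $q > n$, the horn map $\lambda_{q,q}$ is a diffeomorphism, so in particular its differential at $\sigma^q(x)$ is injective. Its kernel therefore vanishes, giving $\bigcap_{i=0}^{q-1}\ker((f_i^q)_*) = 0$, and by the isomorphism above $\hat{T}_{x,q}X = 0$.

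The step that requires the most care is the identification in the first paragraph: one must match the paper's quotient definition of $\hat{T}_{x,q}X$ with the Moore model built from the kernels of $f_0^q, \ldots, f_{q-1}^q$ (rather than some other selection of faces), and confirm that this identification is compatible with passing to tangent spaces at the unit $\sigma^q(x)$. One should also check that the kernel of $(\lambda_{q,q})_*$, computed inside the genuine tangent space of $\Lambda_{q,q}X$, coincides with $\bigcap_{i=0}^{q-1}\ker((f_i^q)_*)$; this holds because the linearized horn compatibility equations cut out a linear subspace of $\prod_{i=0}^{q-1} T_{x,q-1}X$ into which the differential lands, so the kernel may be computed componentwise.
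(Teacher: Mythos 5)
Your proof is correct and follows essentially the same route the paper indicates: identify $\hat{T}_{x,q}X$ with $\ker(\lambda_{q,q})_*$ via the Dold--Kan normalization and observe that this kernel vanishes once $\lambda_{q,q}$ is a diffeomorphism for $q>n$. The paper only sketches this in the case $n=2$ (through the explicit splittings of Section \ref{subsec:simplicial} and Remark \ref{rmk:wtangent}) and leaves the general case to the reader, which your argument supplies.
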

In this paper, we will only make use of Proposition \ref{prop:n-complex} in the case $n=2$. A proof in this case is essentially contained in Section \ref{subsec:simplicial}; in particular, see Remark \ref{rmk:wtangent}. We leave the general case to the reader.

\begin{example}
	In the case where $X_\bullet$ is the nerve of a Lie groupoid $G \arrows M$, then the tangent complex can be identified with the $2$-term complex $A \xrightarrow{\rho} TM$, where $A$ is the Lie algebroid of $G$ and $\rho$ is the anchor map.
\end{example}

\subsection{Forms on Lie $2$-groupoids}\label{subsec:lie2}
We will now restrict our attention to the case of Lie $2$-groupoids. From Proposition \ref{prop:n-complex}, we know that the tangent complex of a Lie $2$-groupoid $X_\bullet$ is a $3$-term complex
\[
\hat{T}_2 X \xrightarrow{\partial} \hat{T}_1 X \xrightarrow{\partial} T_0 X.
\]
Given a normalized $2$-form $\omega \in \Omega^2_\nu (X_2)$, we can obtain the following bilinear pairings on the tangent complex at any $x \in X_0$ (we learned of these from Getzler \cite{getzler:slides}): 
\begin{enumerate}
	\item For $v\in T_{x,0}X$ and $w\in T_{x,2}X$, let 
	\begin{equation}\label{eqn:tildeA}
\widetilde{A}_\omega( v, w):=\omega(\sigma^2_*v, w).
\end{equation}
It follows from the assumption that $\omega$ is normalized that $\widetilde{A}_\omega$ descends to a well-defined bilinear pairing $A_\omega$ between $T_{x,0}X$ and $\hat{T}_{x,2}X$. 
\item For $\theta, \eta\in T_{x,1}X$, let
\begin{equation}\label{eqn:tildeB}
\widetilde{B}_\omega(\theta, \eta):=\omega\left((\sigma^1_1)_*\theta, (\sigma^1_0)_*\eta\right)
+\omega\left((\sigma^1_1)_*\eta, (\sigma^1_0)_*\theta \right). 
\end{equation}
It follows from the assumption that $\omega$ is normalized that $\widetilde{B}_\omega$ descends to a well-defined symmetric bilinear form $B_\omega$ on $\hat{T}_{x,1}X$. 
\end{enumerate}

\begin{definition}\label{dfn:nondegeneracy}
	A normalized $2$-form $\omega \in \Omega^2_\nu(X_2)$ is called \emph{simplicially nondegenerate} if the induced pairings $A_\omega$ and $B_\omega$ are nondegenerate at all $x \in X_0$.
\end{definition}

\begin{definition}\label{dfn:symplectic2groupoid}A \emph{symplectic $2$-groupoid} is a Lie $2$-groupoid $X_\bullet$ equipped with a closed, multiplicative, normalized, and simplicially nondegenerate $2$-form $\omega \in \Omega^{2}_\nu(X_{2})$.
\end{definition}

\begin{definition}\label{dfn:equivalence} Two closed, multiplicative, normalized, and simplicially nondegenerate $2$-forms $\omega, \omega' \in \Omega^{2}_\nu(X_2)$ are \emph{equivalent} if there exists a closed and normalized $2$-form $\alpha \in \Omega^2_\nu(X_1)$ such that $\omega' - \omega = \delta \alpha$ and $A_{\delta \alpha}=B_{\delta \alpha} = 0$.
\end{definition}

\begin{remark}\label{rmk:equivalence}
By a straightforward calculation using \eqref{eqn:tildeA} and \eqref{eqn:facedeg}, one can see that $A_{\delta \alpha} = 0$ if and only if
\begin{equation} \label{eqn:Adelta}
\alpha\left((\sigma_0^0)_*v, \partial w\right) = 0
\end{equation}
for all $v \in T_{x,0} X$ and $w \in T_{x,2} X$. Similarly, $B_{\delta \alpha} = 0$ if and only if
\begin{equation} \label{eqn:Bdelta}
\alpha\left(\theta, (\sigma_0^0)_* \partial \eta\right) + \alpha\left(\eta, (\sigma_0^0)_* \partial \theta\right) = 0
\end{equation}
for all $\theta, \eta \in T_{x,1} X$.
\end{remark}

\begin{remark}\label{rmk:normalized} It is known \cite{cdw} that any multiplicative $2$-form on a Lie groupoid is automatically normalized; thus the normalization condition does not explicitly appear in the definition of symplectic groupoid. However, in the case of Lie $2$-groupoids, normalization does not automatically follow from multiplicativity. For example, for any manifold $M$, let $X_k=M$ for all $k$, with all the face and degeneracy maps being the identity. It is immediate that any nonzero $2$-form $\omega$ on $X_2=M$  is multiplicative but not normalized.
\end{remark}

\subsection{Example: an integration of $A \oplus A^*$}\label{sec:double}
In \cite{lwx:courant}, Liu, Weinstein, and Xu constructed a Courant algebroid $A \oplus A^*$ associated to any Lie bialgebroid $(A,A^*)$. This construction leads to a large and important class of Courant algebroids. In \cite{mt:double}, the authors described a method of integrating Courant algebroids of the form $A\oplus A^*$ by first integrating (if possible) the Lie bialgebroid $(A,A^*)$ to a symplectic double Lie groupoid $D$ and then applying the bar functor to obtain a Lie $2$-groupoid $\overline{W}ND$ equipped with a closed $2$-form.  The fact that this $2$-form is degenerate was the first clue that the correct notion of symplectic $2$-groupoid should allow for $2$-forms that have some degeneracy.

We will now prove that the integration of the standard Courant algebroid\footnote{The extension of the proof to the general case $\overline{W}ND$, where $D$ is a symplectic double Lie groupoid, is similar.} $TM \oplus T^*M$ in \cite{mt:double} satisfies Definition \ref{dfn:symplectic2groupoid}.   

We first recall the Lie $2$-groupoid obtained in \cite{mt:double} as integration of $TM \oplus T^*M$. For any $k \geq 1$, let $T^*_{(k)} M$ denote the direct sum of $k$ copies of $T^*M$.
For each $q \geq 0$, define $X_q$ as
\[
X_q = M\times T^*M\times \cdots \times T^*_{(q)}M. 
\]
Let $\tau^q_i: T^*_{(q)}M\to T^*M$ be the projection onto the $i$-th component, let $p_q: T^*_{(q)}M \to M$ denote the bundle projection map, and let $\iota^q: M \to T^*_{(q)}M$ be the zero section map. We will omit the index $q$ in $p_q$ and $\iota^q$ when $q=1$. Finally, for $i=1,2$, let $\iota_i^2: T^*M \to T^*_{(2)}M$ be defined as
\begin{align*}
 \iota_1^2(\xi) &= \big(\xi, \iota(p(\xi))\big), & \iota_2^2(\xi) &= \big(\iota(p(\xi)), \xi\big).
\end{align*}

The face and degeneracy maps between $X_0 = M$ and $X_1 = M \times T^*M$ are defined as
\begin{align*}
\sigma_0^0(x) &= (x, \iota(x)), &
f_0^1(x,\xi) &= x, & f_1^1(x, \xi) &= p(\xi),
\end{align*}
for $x \in M$ and $\xi \in T^*M$. The degeneracy maps $\sigma_i^1:X_1 \to X_2$ are defined as
\begin{align*}
\sigma^1_0(x, \xi) &=(x, \xi, \iota_1^2(\xi)), & \sigma^1_1(x, \xi) &= (x,\iota(x), \iota^2_2(\xi)), 
\end{align*}
and the face maps $f^2_i: X_2\to X_1$ are defined as
\begin{align*}
f^2_0(x, \xi, \xi^2)&:=(x, \xi),\\
f^2_1(x, \xi, \xi^2)&:=(x, \tau_1^2(\xi^2)+\tau_2^2(\xi^2)),\\
f^2_2(x, \xi, \xi^2)&:=(p(\xi), \tau_2^2(\xi^2)),
\end{align*}
for $x \in M$, $\xi \in T^*M$, and $\xi^2 \in T^*_{(2)}M$.

As we will only need the structure maps up to level $2$, we stop here and refer the reader to \cite{mt:double} for the general definitions of the simplicial structure maps.

There is a natural map $d: X_2=M\times T^*M\times T^*_{(2)} M\to T^*M\times T^*M$ given by 
\[
d(m, \xi, \xi^2):=(\xi, \tau^2_1(\xi^2)). 
\]
Let $\omega_0$ be the canonical symplectic form on $T^*M$. Then the $2$-form $\omega \in \Omega^2(X_2)$ is defined as the pullback by $d$ of $(\omega_0, -\omega_0)$. It is clear that $\omega$ is closed, we proved in \cite[Proposition 6.2]{mt:double} that $\omega$ is multiplicative, and it can be easily checked that $\omega$ is normalized.

\begin{prop}\label{prop:symplectic} The 2-form $\omega$ on $X_2=M\times T^*M\times T^*_{(2)}M$ is simplicially nondegenerate. Therefore, $(X_\bullet, \omega)$ is a symplectic $2$-groupoid.
\end{prop}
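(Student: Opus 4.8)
\emph{Proof proposal.} The plan is to compute the tangent complex of $X_\bullet$ at an arbitrary point $x \in X_0 = M$ and then read the pairings $A_\omega$ and $B_\omega$ directly off their definitions \eqref{eqn:tildeA} and \eqref{eqn:tildeB}. Since every unit $\sigma^q(x)$ is a zero section, the relevant tangent spaces split canonically into horizontal (base) and vertical (fiber) parts: $T_{x,1}X \cong T_xM \oplus (T_xM \oplus T^*_xM)$ and $T_{x,2}X \cong T_xM \oplus (T_xM \oplus T^*_xM) \oplus (T_xM \oplus T^*_xM \oplus T^*_xM)$, where in each cotangent factor the first summand records the horizontal direction and the remaining summands record the fiber directions.

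First I would identify the normalized tangent spaces. Differentiating the degeneracy maps $\sigma^0_0$, $\sigma^1_0$, and $\sigma^1_1$ at the units yields explicit formulas for the degenerate subspaces, and passing to the quotients in the definition of $\hat{T}_{x,q}X$ gives $\hat{T}_{x,1}X \cong T_xM \oplus T^*_xM$ and $\hat{T}_{x,2}X \cong T^*_xM$. (Along the way one sees that the tangent complex is the three-term complex $T^*M \to TM \oplus T^*M \to TM$ whose maps are, up to sign, the anchor of the standard Courant algebroid and its adjoint; this also confirms Proposition \ref{prop:n-complex} for $n=2$ in this example.)

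Next I would express $\omega$ in these coordinates. Since $\omega = d^*(\omega_0, -\omega_0)$ with $d(m,\xi,\xi^2) = (\xi, \tau^2_1(\xi^2))$, the differential $d_*$ sends a tangent vector of $X_2$ to the pair formed by its $\xi$-component and its $\tau^2_1$-component, and evaluating through $\omega_0$ (which at the zero section is the antisymmetric pairing between horizontal and vertical directions) gives a simple closed form for $\omega$ on $T_{x,2}X$. Substituting $\sigma^2_* v$ into \eqref{eqn:tildeA} then shows that $A_\omega$ is, up to sign, the canonical duality pairing between $T_xM = T_{x,0}X$ and $T^*_xM \cong \hat{T}_{x,2}X$, which is nondegenerate. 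Substituting the images of $\sigma^1_0$ and $\sigma^1_1$ into \eqref{eqn:tildeB} shows that $B_\omega$ is the split-signature pairing $\big((u,\alpha),(w,\beta)\big) \mapsto \beta(u) + \alpha(w)$ on $\hat{T}_{x,1}X \cong T_xM \oplus T^*_xM$, which is symmetric and nondegenerate. By Definition \ref{dfn:nondegeneracy}, $\omega$ is simplicially nondegenerate; combined with the already-established facts that $\omega$ is closed, multiplicative, and normalized and that $X_\bullet$ is a Lie $2$-groupoid, Definition \ref{dfn:symplectic2groupoid} yields the claim.

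The routine but error-prone part, and the main obstacle, is the bookkeeping in the first two steps: correctly differentiating the degeneracy and face maps while keeping the three horizontal $T_xM$-directions and the several vertical $T^*_xM$-directions straight, identifying the degenerate subspaces, and confirming that $\widetilde{A}_\omega$ and $\widetilde{B}_\omega$ descend to the quotients. The descent is guaranteed by normalization of $\omega$ through the criteria in the remarks following \eqref{eqn:tildeA} and \eqref{eqn:tildeB}, so once the coordinates are set up with care the nondegeneracy of both pairings is immediate.
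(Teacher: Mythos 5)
Your proposal is correct and follows essentially the same route as the paper: identify $T_{x,q}X$ explicitly using the zero-section splitting, differentiate the degeneracy maps, write $\omega$ in these coordinates, and check that $A_\omega$ becomes the canonical pairing of $T_xM$ with $T^*_xM \cong \hat{T}_{x,2}X$ and $B_\omega$ the standard split pairing on $T_xM \oplus T^*_xM \cong \hat{T}_{x,1}X$. The only cosmetic difference is that you pass to the quotients $\hat{T}_{x,q}X$ before evaluating the pairings, whereas the paper computes $\widetilde{A}_\omega$ and $\widetilde{B}_\omega$ on the full tangent spaces and then observes that their kernels are exactly the degenerate subspaces.
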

\begin{proof}
We start by describing the tangent spaces $T_{x,q}X$.  We observe that $T_{x,0} X = T_x M$, that $T_{x,1}X $ can be identified with $T_x M \oplus T_x M \oplus T^*_xM$, and that $T_{x,2}X$ can be similarly identified with $T_x M\oplus T_x M\oplus T_x^* M\oplus T_x M\oplus T^*_x M\oplus T^*_x M$. Using these identifications, we can describe the degeneracy maps $(\sigma_0^0)_*: T_{x,0}X  \to T_{x,1} X$ and $(\sigma_i^1)_*: T_{x,1}X \to T_{x,2}X$ as
\[
(\sigma_0^0)_*(v) = (v,v,0),
\]
\begin{align*}
(\sigma^1_0)_*(v_1, v_2, \xi)&=(v_1, v_2, \xi, v_2, \xi, 0),& (\sigma^1_1)_*(v_1, v_2, \xi)&=(v_1, v_1, 0, v_2, 0, \xi),
\end{align*}
for $v, v_1, v_2 \in T_x M$ and $\xi \in T^*_x M$. 

At $\sigma^2(x)$, the $2$-form $\omega$ is given in terms of the above identifications by
\[
\omega\Big(\big(v_1, v_2, \xi_1, v_3,\xi_2, \xi_3\big), \big(v_1',v_2',\xi_1', v_3', \xi_2',\xi_3'\big)\Big)=\xi_1'(v_2)-\xi_1(v_2')-\xi_2'(v_3)+\xi_2(v_3'). 
\]
From this, we can compute the pairing $\widetilde{A}_\omega$ between $T_{x,0}X$ and $T_{x,2}X$ to be
\[
\widetilde{A}_\omega\Big(v,\big(v_1, v_2,\xi_1, v_3, \xi_2, \xi_3\big)\Big):=\xi_1(v)-\xi_2(v)
\]
and the pairing $\widetilde{B}_\omega$ on $T_{x,1}$ to be
\[
\widetilde{B}_\omega\Big((v_1, v_2, \xi), (v_1',v_2', \xi')\Big):=\xi'(v_1-v_2)+\xi(v'_1-v'_2). 
\]
One can now see that the kernels of $\widetilde{A}_\omega$ and $\widetilde{B}_\omega$ consist of sums of vectors in the images of $(\sigma_i^q)_*$. Thus the induced pairing $A_\omega$ between $T_{x,0} X$ and $\hat{T}_{x,2}X$ and the induced bilinear form $B_\omega$ on $\hat{T}_{x,1}X$ are both nondegenerate.
\end{proof}

\begin{remark}
The fact that $\hat{T}_{x,2}X$ pairs nondegenerately with $T_{x,0} X = T_x M$ means that $\hat{T}_{x,2} X$ is isomorphic to $T^*_x M$. The isomorphism is explicitly given by composing the map $T^*_x M \to T_{x,2} X$, $\xi \mapsto (0,0,\xi,0,0,0)$ with the quotient map $T_{x,2} X \to \hat{T}_{x,2} X$.

Similarly, $\hat{T}_{x,1} X$ is isomorphic to $T_x M \oplus T^*_x M$, with the isomorphism given by the map $T_x M \oplus T^*_x M \to T_{x,1} X$, $(v, \xi) \mapsto (v, 0, \xi)$. Under this isomorphism, the pairing $B_\omega$ agrees with the standard symmetric pairing on $TM \oplus T^*M$, which is an important part of the Courant algebroid structure.
\end{remark}

\section{Constant symplectic $2$-groupoids}\label{sec:courant}
A \emph{constant symplectic $2$-groupoid} is a symplectic $2$-groupoid $(V_\bullet, \omega)$ where $V_\bullet$ is a simplicial vector space and $\omega \in \Omega^2_\nu(V_2)$ is constant. In this section, we will study constant symplectic $2$-groupoids and obtain a fairly simple description of them. We will later see that there is a correspondence between constant symplectic $2$-groupoids and a certain class of Courant algebroids that we call \emph{constant Courant algebroids}. 

\subsection{Linear $2$-groupoids}\label{subsec:simplicial}
A \emph{linear $2$-groupoid} is a Lie $2$-groupoid $V_\bullet$ such that each $V_q$ is a vector space, and where the face and degeneracy maps are all linear. Linear $2$-groupoids are known to be equivalent, via the Dold-Kan correspondence, to $3$-term chain complexes of vector spaces. Since there are different possible choices of convention and we will require explicit formulas, we will give a brief description of this correspondence.

Suppose that $V_\bullet$ is a linear $2$-groupoid. A $3$-term chain complex $(W_\bullet, \partial)$ is constructed as follows. 

First, we set $W_0 := V_0$. Next, we observe that $f^1_0: V_1 \to W_0$ is a surjection with right inverse $\sigma_0^0$. Thus we have a split short exact sequence
\begin{equation}\label{eq:W_1} \xymatrix{W_1 \ar[r] & V_1 \ar_{f^1_0}[r] & W_0 \ar_{\sigma_0^0}@/_/[l]},
\end{equation}
where $W_1 := \ker f_0^1$, giving us the natural decomposition $V_1 = W_1 \oplus W_0$. Using this decomposition, we define a linear map $\partial_1: W_1 \to W_0$ given by $\partial_1 w_1 = -f_1^1(w_1,0)$. (The sign is chosen to agree with the tangent complex; see Remark \ref{rmk:wtangent}.) We then have the following formulas for the face and degeneracy maps between $V_1$ and $V_0$:
\begin{align*}
f^1_0(w_1, w_0)&=w_0,& f^1_1(w_1, w_0)&= w_0 - \partial_1 w_1,& \sigma^0_0(w_0)&=(0,w_0).
\end{align*}

Now consider the horn space $\Lambda_{2, 2}V$, consisting of pairs $(v_1, v_1') \in V_1 \times V_1$ such that $f_0^1(v_1) = f_0^1(v_1')$. Given such a pair, we may use the decomposition $V_1 = W_1 \oplus W_0$ to write $v_1 = (w_1, w_0)$, $v_1' = (w_1', w_0)$. This allows us to make the identification $\Lambda_{2,2}V = W_1 \oplus W_1 \oplus W_0$, where the pair $(v_1, v_1')$ is identified with $(w_1, w_1', w_0)$.

Let $\Phi_2: \Lambda_{2,2}V \to V_2$ be defined by
\begin{equation}\label{eqn:Phi2}
  \Phi_2(v_1, v_1') = \sigma_0^1 v_1 + \sigma_1^1(v_1' - v_1).
\end{equation}

One can see that $\Phi_2$ is a right inverse of the horn map $\lambda_{2,2}$, so we have a split short exact sequence
\begin{equation}\label{eq:W2} \xymatrix{W_2 \ar[r] & V_2 \ar_{\lambda_{2,2}}[r] & \Lambda_{2,2}V \ar_{\Phi_2}@/_/[l]},
\end{equation}
where $W_2 := \ker \lambda_{2,2}$, giving us a natural decomposition $V_2 = W_2 \oplus \Lambda_{2,2}V = W_2 \oplus W_1 \oplus W_1 \oplus W_0$. Under this decomposition, we have by construction that
\begin{align*}
 f_0^2(w_2, w_1, w_1', w_0) &= (w_1,w_0),\\
 f_1^2(w_2, w_1, w_1', w_0) &= (w_1',w_0). 
\end{align*}
Using \eqref{eqn:Phi2}, we also see that
\begin{align}
 \sigma_0^1(w_1,w_0) &= \Phi_2(w_1,w_1,w_0) = (0,w_1,w_1,w_0), \label{eq:degen1} \\
 \sigma_1^1(w_1,w_0) &= \Phi_2(0,w_1,w_0) = (0,0,w_1,w_0). \label{eq:degen2}
\end{align}
To obtain a formula for $f_2^2$, we first define a map $\partial_2: W_2 \to W_1$, given by \begin{equation}\label{eqn:partial2}
f_2^2(w_2,0,0,0) = (\partial_2 w_2,0).
\end{equation}
Note that \eqref{eqn:partial2} well-defines $\partial_2$, since $f_0^1 f_2^2(w_2,0,0,0) = f_1^1 f_0^2(w_2,0,0,0) = 0$. We then make the following calculation:
\begin{equation}\label{eqn:f2zero}
	\begin{split}
	 f_2^2(0,w_1,w_1',w_0) &= f_2^2 \Phi_2(w_1,w_1',w_0)\\
	 &= f_2^2 \sigma_0^1(w_1,w_0) + f_2^2 \sigma_1^1 (w_1'-w_1,0)\\
	 &= \sigma_0^0 f_1^1 (w_1,w_0) + (w_1' - w_1,0)\\
	 &= (w_1' - w_1, w_0 - \partial_1 w_1).
	\end{split}
\end{equation}
Putting \eqref{eqn:partial2} and \eqref{eqn:f2zero} together, we have
\[ f_2^2(w_2,w_1,w_1',w_0) = (\partial_2 w_2 + w_1' - w_1, w_0 - \partial_1 w_1).\]

We observe that $\partial_1 \partial_2 w_2 = -f_1^1 f_2^2(w_2,0,0,0) = -f_1^1 f_1^2(w_2,0,0,0) = 0$, so
\[
W_2 \xrightarrow{\partial_2} W_1 \xrightarrow{\partial_1} W_0
\]
is a $3$-term chain complex.

Now consider the horn space $\Lambda_{3,3}V$, consisting of triples $(v_2, v_2', v_2'') \in V_2 \times V_2 \times V_2$ such that $f_0^2 v_2 = f_0^2 v_2'$, $f_1^2 v_2 = f_0^2 v_2''$, and $f_1^2 v_2' = f_1^2 v_2''$. In terms of the decomposition $V_2 = W_2 \oplus W_1 \oplus W_1 \oplus W_0$, we may write $v_2 = (w_2, w_1, w_1', w_0)$, $v_2' = (w_2', w_1, w_1'', w_0)$, $v_2'' = (w_2'', w_1', w_1'', w_0)$. This allows us to make the identification $\Lambda_{3,3}V = W_2 \oplus W_2 \oplus W_2 \oplus W_1 \oplus W_1 \oplus W_1 \oplus W_0$, where $(v_2, v_2', v_2'')$ is identified with $(w_2, w_2', w_2'', w_1, w_1', w_1'', w_0)$.

Let $\Phi_3: \Lambda_{3,3}V \to V_3$ be given by 
\[ \Phi_3(v_2, v_2', v_2'') = \sigma_0^2 v_2 + \sigma_1^2(v_2' - v_2) + \sigma_2^2(v_2'' - v_2' + v_2 - \sigma_0^1 f_1^2 v_2).\]
One can see that $\Phi_3$ inverts the horn map $\lambda_{3,3}$, which is assumed to be an isomorphism since $V_\bullet$ is a Lie $2$-groupoid. Implicitly using $\Phi_3$ to identify $V_3$ with $\Lambda_{3,3}V$, we then obtain the following formulas for the face maps:
\begin{align}
 f_0^3(w_2, w_2', w_2'', w_1, w_1', w_1'', w_0) = & v_2 = (w_2,w_1,w_1',w_0), \label{eq:face30}\\
 f_1^3(w_2, w_2', w_2'', w_1, w_1', w_1'', w_0) = & v_2' = (w_2',w_1,w_1'',w_0), \label{eq:face31}\\
 f_2^3(w_2, w_2', w_2'', w_1, w_1', w_1'', w_0) = & v_2'' = (w_2'',w_1',w_1'',w_0),\label{eq:face32}\\ 
 \begin{split}
 f_3^3(w_2, w_2', w_2'', w_1, w_1', w_1'', w_0) = & (w_2'' - w_2' + w_2,\partial_2 w_2 + w_1' - w_1,\\
 &\partial_2 w_2' + w_1'' - w_1, w_0 - \partial_1 w_1).\label{eq:face33}
\end{split}
\end{align}

We have seen that, given a linear $2$-groupoid $V_\bullet$, we can obtain a $3$-term chain complex $(W_\bullet, \partial)$. Conversely, given a $3$-term chain complex, one can construct a linear $2$-groupoid by setting $V_0 = W_0$, $V_1 = W_1 \oplus W_0$, etc., with the face and degeneracy maps given in low degrees by the above formulas. As $V_\bullet$ is a Lie $2$-groupoid, the higher simplicial maps are completely determined by the data in low degrees.

\begin{remark}\label{rmk:wtangent}
	From the short exact sequences \eqref{eq:W_1} and \eqref{eq:W2}, we can see that the $3$-term chain complex $(W_\bullet, \partial)$ is naturally isomorphic to the tangent complex of $V_\bullet$. As a result, we obtain an alternative description of the tangent complex of a Lie $2$-groupoid $X_\bullet$, where $\hat{T}_{x,2} X = \ker (\lambda_2^2)_* \subseteq T_{x,2} X$, $\hat{T}_{x,1} X = \ker (\lambda_1^1)_* = \ker (f_0^1)_* \subseteq T_{x,1} X$, and where the boundary map is $\partial_q = (-1)^q (f_q^q)_*$.
\end{remark}

\subsection{Constant multiplicative $2$-forms}\label{subsec:multiplicative}
Let $V_\bullet$ be a linear $2$-groupoid. In this subsection, we will obtain a description of constant multiplicative $2$-forms $\omega \in \Omega_\nu^2(V_2)$ in terms of data on the associated $3$-term complex $(W_\bullet, \partial)$.

Recall from Section \ref{subsec:simplicial} that $V_2$ can be naturally decomposed as $W_2\oplus W_1\oplus W_1\oplus W_0$. With respect to this decomposition, we may write any constant $\omega \in \Omega^2(V_2)$ as a sum of bilinear forms $C_{11}\in \wedge^2 W_2^*$, $C_{12}, C_{13} \in W_2^*\otimes W_1^*$, $C_{14}\in W_2^*\otimes W_0^*$, 
$C_{22}, C_{33} \in \wedge^2 W_1^*$, $C_{23}\in W_1^*\otimes W_1^*$, $C_{24}, C_{34} \in W_1^*\otimes W_0^*$, and $C_{44}\in \wedge^2 W_0^*$. Then we can write $\omega$ in the form of a block matrix
\[
\omega=\begin{bmatrix}
C_{11}&C_{12}&C_{13}&C_{14}\\
C_{21}&C_{22}&C_{23}&C_{24}\\
C_{31}&C_{32}&C_{33}&C_{34}\\
C_{41}&C_{42}&C_{43}&C_{44}
\end{bmatrix},
\]
where $C_{ij}(w,w') = -C_{ji}(w',w)$.

Now suppose that $\omega$ is normalized. From \eqref{eq:degen2}, we immediately see that $C_{33}$, $C_{34}$, and $C_{44}$ vanish. From \eqref{eq:degen1}, we then see that $C_{24} = 0$, since
\[ 0 = \omega( (0,w_1,w_1,w_0),(0,0,0,w_0')) = C_{24}(w_1, w_0')\]
for all $w_1 \in W_1$ and $w_0,w_0' \in W_0$, and that
\[ 0 = \omega( (0,w_1,w_1,0),(0,w_1',w_1',0)) = C_{22}(w_1, w_1') + C_{23}(w_1, w_1') + C_{32}(w_1,w_1')\]
for all $w_1, w_1' \in W_1$. Thus, $\omega$ is of the form
\begin{equation}\label{eq:omeganorm}
\omega=\begin{bmatrix}
C_{11}&C_{12}&C_{13}&C_{14}\\
C_{21}&C_{22}&C_{23}&0\\
C_{31}&C_{32}&0&0\\
C_{41}&0&0&0
\end{bmatrix},
\end{equation}
where
\begin{equation}\label{eq:normc}
C_{22}+C_{23}+C_{32}=0.
\end{equation}
Conversely, it is straightforward to check that, if $\omega$ is of the form \eqref{eq:omeganorm} and satisfies \eqref{eq:normc}, then $\omega$ is normalized.

Now suppose that $\omega$ is normalized and multiplicative. In the following series of lemmas, we find further relations that hold between the bilinear forms $C_{ij}$. The proofs utilize the decomposition $V_3 = W_2 \oplus W_2 \oplus W_2 \oplus W_1 \oplus W_1 \oplus W_1 \oplus W_0$ constructed in Section \ref{subsec:simplicial}, as well as the formulas \eqref{eq:face30}--\eqref{eq:face33} for the face maps $f_i^3$.

\begin{lemma}\label{lem:13-23} $C_{13}(w_2, w_1) = C_{32}(w_1, \partial w_2)$ for all $w_2\in W_2$ and $w_1\in W_1$.
\end{lemma}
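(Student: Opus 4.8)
The plan is to use multiplicativity, $\delta\omega = 0$, which for a constant form is equivalent to the vanishing of $\sum_{i=0}^3 (-1)^i \omega(f_i^3 X, f_i^3 Y)$ for every pair $X, Y \in V_3$. Using the decomposition $V_3 = W_2 \oplus W_2 \oplus W_2 \oplus W_1 \oplus W_1 \oplus W_1 \oplus W_0$ and the explicit face maps \eqref{eq:face30}--\eqref{eq:face33}, each such evaluation is a linear combination of the blocks $C_{ij}$ of the normalized form \eqref{eq:omeganorm}. The key idea is to feed in test vectors supported on single summands, chosen so that all but one of the four terms vanish, leaving a relation involving only $C_{13}$ and $C_{32}$.

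Concretely, I would take $X \in V_3$ to be the vector whose $w_1''$-component equals $w_1$ (all other components zero) and $Y \in V_3$ to be the vector whose first $W_2$-component equals $w_2$. Inspecting \eqref{eq:face30}--\eqref{eq:face32} shows that $f_0^3 X = 0$ while $f_1^3 Y = f_2^3 Y = 0$, so the $i=0,1,2$ terms all drop out. The only surviving face map is $f_3^3$, which is precisely the one that injects the boundary operator: from \eqref{eq:face33} one computes $f_3^3 X = (0,0,w_1,0)$ and $f_3^3 Y = (w_2, \partial w_2, 0, 0)$. This is exactly where $\partial w_2$ enters the picture.

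Evaluating $\omega$ on this single pair via \eqref{eq:omeganorm}, only the third slot of $f_3^3 X$ is nonzero, so $\omega(f_3^3 X, f_3^3 Y) = C_{31}(w_1, w_2) + C_{32}(w_1, \partial w_2)$. Hence multiplicativity gives $C_{31}(w_1, w_2) + C_{32}(w_1, \partial w_2) = 0$, and the antisymmetry $C_{31}(w_1, w_2) = -C_{13}(w_2, w_1)$ yields the claim.

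The one genuinely delicate step is the choice of test vectors. The naive attempt, putting $w_2$ in the first argument and $w_1$ in a $W_1$-slot of the second, produces instead the messier identity $C_{13}(w_2,w_1) = C_{12}(w_2,w_1) + C_{22}(\partial w_2, w_1)$, which only collapses to the stated lemma after also invoking the normalization relation \eqref{eq:normc} together with a separate relation for $C_{12}$. Placing $w_2$ in the second argument instead, so that $\partial w_2$ is generated by $f_3^3$ and paired against the $w_1$ that $X$ feeds into the third slot, isolates $C_{32}(w_1,\partial w_2)$ cleanly and avoids those auxiliary relations entirely. Getting this pairing right is the crux; the remaining bookkeeping with the face maps is routine.
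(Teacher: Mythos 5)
Your proposal is correct and is essentially identical to the paper's proof: the paper also evaluates $\delta\omega$ on the pair $u=(w_2,0,0,0,0,0,0)$, $v=(0,0,0,0,0,w_1,0)$, reduces to the single surviving $f_3^3$ term, and concludes via the antisymmetry $C_{ij}(w,w')=-C_{ji}(w',w)$. The only difference is the order of the two arguments, which is immaterial since $\omega$ is skew.
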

\begin{proof}
Let $u,v \in V_3$ be defined as $u=(w_2,0,0, 0,0,0,0)$ and $v=(0,0,0,0,0,w_1, 0)$. Then 
\begin{equation*}
\begin{split}
0 = \delta \omega(u,v)&= \sum_{i=0}^3 (-1)^i \omega(f_i^3 u, f_i^3 v) \\
 &= -\omega\big( (w_2,\partial w_2, 0,0 ), (0,0,w_1,0 )\big)\\
 &= -C_{13}(w_2,w_1)-C_{23}(\partial w_2, w_1)\\
 &= -C_{13}(w_2,w_1) + C_{32}(w_1, \partial w_2).\qedhere
 \end{split}
 \end{equation*}
\end{proof}

\begin{lemma}\label{lem:12-23} $C_{12}(w_2,w_1)=C_{32}(\partial w_2, w_1)$ for all $w_2\in W_2, w_1\in W_1$. 
\end{lemma}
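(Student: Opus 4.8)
The plan is to mimic the proof of Lemma \ref{lem:13-23}, exploiting the same multiplicativity relation $\delta \omega = 0$ but feeding it a different pair of test vectors in $V_3$. The claim $C_{12}(w_2, w_1) = C_{32}(\partial w_2, w_1)$ pairs the $W_2$-component against the first $W_1$-slot (that is the index $2$, the slot occupied by $w_1$), so I need to arrange a computation in which the only surviving terms are exactly $C_{12}$ and $C_{32}$. Since $C_{12} \in W_2^* \otimes W_1^*$ measures the pairing of a $W_2$-vector in $V_2$'s first slot with a $W_1$-vector in the second slot, I want one test vector supported purely in the $W_2$-direction and another supported purely in the first $W_1$-direction of $V_3$.

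Concretely, I would set $u = (w_2, 0, 0, 0, 0, 0, 0)$ and $v = (0, 0, 0, w_1, 0, 0, 0)$ in the decomposition $V_3 = W_2 \oplus W_2 \oplus W_2 \oplus W_1 \oplus W_1 \oplus W_1 \oplus W_0$. Then I would expand $0 = \delta\omega(u,v) = \sum_{i=0}^3 (-1)^i \omega(f_i^3 u, f_i^3 v)$ using the explicit face-map formulas \eqref{eq:face30}--\eqref{eq:face33}. The point is that $v$ sits in the $w_1$-slot, which the face formulas route differently from the $w_1''$-slot used in the previous lemma: applying $f_0^3$ sends $u$ to $(w_2, 0, 0, 0)$ and $v$ to $(0, w_1, 0, 0)$, producing a $C_{12}$ term; the faces $f_1^3$ and $f_2^3$ should contribute terms that either vanish (because the relevant blocks $C_{ij}$ are zero after normalization) or cancel; and $f_3^3$, whose first component is $w_2'' - w_2' + w_2$ and whose second component involves $\partial_2 w_2$, should produce the $C_{32}(\partial w_2, w_1)$ term with the right sign. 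I expect the surviving identity, after using the normalization constraint \eqref{eq:normc} and the vanishing of $C_{24}, C_{33}, C_{34}, C_{44}$, to collapse to exactly $C_{12}(w_2, w_1) - C_{32}(\partial w_2, w_1) = 0$.

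The main obstacle will be bookkeeping the signs and verifying that the spurious terms genuinely cancel rather than merely being hoped away. In particular, I must track carefully which $W_1$-slot each face map populates: under $f_3^3$ the vectors $u$ and $v$ acquire nonzero entries in several components simultaneously, and I need the cross-terms (e.g.\ any $C_{22}$ or $C_{23}$ contributions coming from the $w_1'$ and $w_1''$ components generated by $\partial w_2$ and $w_1$) to either vanish by normalization or combine via \eqref{eq:normc}. A secondary subtlety is orientation of the arguments: because $C_{ij}(w,w') = -C_{ji}(w',w)$, I should be consistent about the order in which $f_i^3 u$ and $f_i^3 v$ enter $\omega$, so that the final $C_{32}(\partial w_2, w_1)$ appears with the sign asserted in the statement rather than its negative. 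Once the test vectors are chosen correctly, the remainder is the same kind of routine four-term expansion carried out in Lemma \ref{lem:13-23}.
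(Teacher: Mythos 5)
Your overall strategy---expanding $0=\delta\omega(u,v)$ for a suitable pair $u,v\in V_3$ via the face-map formulas \eqref{eq:face30}--\eqref{eq:face33}---is exactly the paper's, but your specific choice of test vectors does not deliver the lemma. With $u=(w_2,0,0,0,0,0,0)$ and $v=(0,0,0,w_1,0,0,0)$ one gets $f_0^3u=(w_2,0,0,0)$, $f_1^3u=f_2^3u=0$, $f_3^3u=(w_2,\partial w_2,0,0)$, and $f_0^3v=f_1^3v=(0,w_1,0,0)$, $f_2^3v=0$, $f_3^3v=(0,-w_1,-w_1,-\partial w_1)$. The alternating sum therefore reduces to
\[
0=2\,C_{12}(w_2,w_1)+C_{13}(w_2,w_1)+C_{14}(w_2,\partial w_1)+C_{22}(\partial w_2,w_1)+C_{23}(\partial w_2,w_1),
\]
and even after invoking \eqref{eq:normc} and Lemma \ref{lem:13-23} this still contains the undetermined block $C_{14}(w_2,\partial w_1)$ and a doubled $C_{12}$. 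Your claim that the $f_3^3$ term produces only $C_{32}(\partial w_2,w_1)$ is where the computation breaks: $f_3^3v$ is nonzero in three slots, so the cross terms $C_{13}$, $C_{14}$, $C_{22}$, $C_{23}$ all survive, and the resulting identity is a (true) linear combination of the lemma with the relation used later in the proof of Theorem \ref{thm:multiplicative}, not the lemma itself.

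The fix is to choose slots so that three of the four faces vanish outright. Taking $u=(0,w_2,0,0,0,0,0)$ (second $W_2$-slot) and $v=(0,0,0,0,w_1,0,0)$ (the $w_1'$-slot), one of $f_i^3u$, $f_i^3v$ is zero for each $i=0,1,2$, while $f_3^3u=(-w_2,0,\partial w_2,0)$ and $f_3^3v=(0,w_1,0,0)$; the only surviving blocks are then $C_{12}$ and $C_{32}$, and the identity collapses directly to $C_{12}(w_2,w_1)=C_{32}(\partial w_2,w_1)$. (Your computation could be salvaged by combining it with the independent identity $C_{41}(\partial w_1,w_2)=C_{12}(w_2,w_1)+C_{13}(w_2,w_1)$, obtained from the pair $u=(0,0,0,w_1,0,0,0)$, $v=(0,0,w_2,0,0,0,0)$ as in the proof of Theorem \ref{thm:multiplicative}, but as written the single expansion you propose does not suffice.)
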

\begin{proof}
Let $u,v \in V_3$ be defined as $u=(0,w_2,0,0,0,0,0)$ and $v=(0,0,0,0,w_1,0,0)$. Then
\begin{equation*}
\begin{split}
0 = \delta \omega(u,v) &= -\omega \big( (-w_2,0,\partial w_2,0), (0,w_1,0,0) \big)\\
&= C_{12}(w_2, w_1) - C_{32}(\partial w_2, w_1). \qedhere
\end{split}
\end{equation*}
\end{proof}

\begin{lemma}\label{lem:11-12} $C_{11}(w_2,w_2')=-C_{32}(\partial w_2,\partial w_2')$ for all $w_2, w_2' \in W_2$. 
\end{lemma}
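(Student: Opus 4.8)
The plan is to follow the template set by Lemmas \ref{lem:13-23} and \ref{lem:12-23}: evaluate the multiplicativity identity $\delta\omega = 0$ on a pair of test vectors in $V_3$ chosen so that almost all face contributions drop out and the only surviving pairing isolates $C_{11}$ against $C_{32}\circ(\partial\times\partial)$. Concretely, using the decomposition $V_3 = W_2 \oplus W_2 \oplus W_2 \oplus W_1 \oplus W_1 \oplus W_1 \oplus W_0$, I would set $u = (w_2,0,0,0,0,0,0)$ and $v = (0,0,w_2',0,0,0,0)$, placing $w_2$ in the first $W_2$ slot and $w_2'$ in the third. Reading off the face maps from \eqref{eq:face30}--\eqref{eq:face33}, one checks that for $i=0,1,2$ at least one of $f_i^3 u$, $f_i^3 v$ vanishes, so that $\delta\omega(u,v)$ reduces to its $i=3$ term. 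Here $f_3^3 u = (w_2,\partial w_2,0,0)$ and $f_3^3 v = (w_2',0,0,0)$.

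Since $\omega$ is multiplicative, this forces $\omega\big((w_2,\partial w_2,0,0),(w_2',0,0,0)\big) = 0$. Expanding via the normalized block form \eqref{eq:omeganorm}, only two terms survive (the second vector lies entirely in the first $W_2$ slot), namely $C_{11}(w_2,w_2') + C_{21}(\partial w_2, w_2') = 0$. I would then rewrite the second term using the skew-symmetry $C_{21}(\partial w_2, w_2') = -C_{12}(w_2',\partial w_2)$ together with Lemma \ref{lem:12-23}, which gives $C_{12}(w_2',\partial w_2) = C_{32}(\partial w_2', \partial w_2)$. Combining these yields the intermediate identity $C_{11}(w_2,w_2') = C_{32}(\partial w_2', \partial w_2)$.

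The final step is to convert this into the stated form. Exchanging $w_2$ and $w_2'$ in the intermediate identity gives $C_{11}(w_2',w_2) = C_{32}(\partial w_2, \partial w_2')$, and since $C_{11} \in \wedge^2 W_2^*$ is antisymmetric, $C_{11}(w_2,w_2') = -C_{11}(w_2',w_2) = -C_{32}(\partial w_2, \partial w_2')$, as claimed.

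I expect the main subtlety to be precisely this last flip: the raw multiplicativity computation naturally produces $C_{32}$ with its two arguments in the reversed order, and the identity as stated emerges only after invoking the antisymmetry of $C_{11}$ (one learns as a byproduct that $C_{32}$ is forced to be skew on the image of $\partial$). A secondary point requiring care is the bookkeeping that verifies the lower face maps $f_0^3, f_1^3, f_2^3$ genuinely annihilate the pairing on the chosen $u,v$; this hinges on the specific placement of $w_2$ and $w_2'$ among the three $W_2$ slots of $V_3$, and a different placement would reintroduce extra $C_{13}$ and $C_{23}$ terms that must then be cancelled using Lemma \ref{lem:13-23} and the skew-symmetry of $\omega$.
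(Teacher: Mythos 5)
Your proof is correct and follows essentially the same route as the paper: the paper's own proof takes $u=(0,0,w_2,0,0,0,0)$ and $v=(w_2',0,0,0,0,0,0)$, so that the surviving $i=3$ term is $-C_{11}(w_2,w_2')-C_{12}(w_2,\partial w_2')$ and the stated identity follows from Lemma \ref{lem:12-23} alone. Your mirror-image placement of the $W_2$ slots is equally valid; it merely produces the transposed identity first, so you need the extra (legitimate, since $C_{11}\in\wedge^2 W_2^*$) appeal to the antisymmetry of $C_{11}$ at the end.
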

\begin{proof}
Let $u,v \in V_3$ be defined as $u=(0,0,w_2,0,0,0,0)$ and $v=(w_2',0,0,0,0,0,0)$. Then
\begin{equation*}
\begin{split}
0 = \delta \omega(u,v) &= -\omega \big( (w_2,0,0,0), (w_2',\partial w_2',0,0) \big)\\
&= -C_{11}(w_2, w_2') -C_{12}(w_2, \partial w_2').
\end{split}
\end{equation*}
By Lemma \ref{lem:12-23}, we see that $C_{11}(w_2,w_2') = -C_{12}(w_2, \partial w_2') = -C_{32}(\partial w_2, \partial w_2')$.
\end{proof}

Lemmas \ref{lem:13-23}--\ref{lem:11-12}, together with \eqref{eq:normc}, show that $\omega$ is completely determined by $C_{41}$ and $C_{32}$. 

\begin{thm}\label{thm:multiplicative} Let $V_\bullet$ be a linear $2$-groupoid with associated $3$-term chain complex $(W_\bullet, \partial)$. There is a one-to-one correspondence between constant multiplicative $2$-forms $\omega \in \Omega^2_\nu(V_2)$ and pairs $(C_{41},C_{32})$, where $C_{41}$ is a bilinear pairing of $W_0$ with $W_2$ and $C_{32}$ is a bilinear form on $W_1$, such that
	\begin{equation}\label{eq:c1423} C_{41}(\partial w_1, w_2) = C_{32}(\partial w_2, w_1) + C_{32}(w_1, \partial w_2)
	\end{equation}
	for all $w_2 \in W_2$ and $w_1 \in W_1$.

\end{thm}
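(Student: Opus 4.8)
The plan is to exhibit the correspondence as a pair of mutually inverse assignments and to identify the constraint \eqref{eq:c1423} as the one multiplicativity relation not yet recorded in Lemmas \ref{lem:13-23}--\ref{lem:11-12}.

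First I would treat the forward direction. Given a constant multiplicative normalized $\omega$, the normalization analysis leading to \eqref{eq:omeganorm} and \eqref{eq:normc} shows that $\omega$ retains only the blocks $C_{11}, C_{12}, C_{13}, C_{14}, C_{22}, C_{23}, C_{32}, C_{41}$, together with their antisymmetric partners. Lemma \ref{lem:13-23} expresses $C_{13}$ through $C_{32}$ and $\partial$, Lemma \ref{lem:12-23} expresses $C_{12}$, and Lemma \ref{lem:11-12} expresses $C_{11}$; the relation \eqref{eq:normc} together with the antisymmetry $C_{23}(w,w') = -C_{32}(w',w)$ determines $C_{22}$ and $C_{23}$ from $C_{32}$; and $C_{14}$ is the antisymmetric partner of $C_{41}$. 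Hence $\omega$ is completely recovered from $(C_{41}, C_{32})$, so the assignment $\omega \mapsto (C_{41}, C_{32})$ is well-defined and injective.

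To check that the image satisfies \eqref{eq:c1423}, I would run one further instance of the multiplicativity equation $\delta \omega = 0$ on $V_3$, choosing the test vectors $u = (0,0,0,w_1,0,0,0)$ and $v = (w_2,0,0,0,0,0,0)$ in the decomposition $V_3 = W_2 \oplus W_2 \oplus W_2 \oplus W_1 \oplus W_1 \oplus W_1 \oplus W_0$. Using the face-map formulas \eqref{eq:face30}--\eqref{eq:face33}, only the terms indexed by $f_0^3$ and $f_3^3$ survive (since $f_1^3 v = 0$ and $f_2^3 u = 0$); substituting the expressions for $C_{12}, C_{13}, C_{22}, C_{23}$ in terms of $C_{32}$ collapses the resulting identity to exactly \eqref{eq:c1423}. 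Since $\delta \omega = 0$ is assumed, this produces the required constraint on the pair.

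For the converse, given any pair $(C_{41}, C_{32})$ satisfying \eqref{eq:c1423}, I would reconstruct $\omega$ as the block matrix \eqref{eq:omeganorm}, defining $C_{11}, C_{12}, C_{13}$ from $C_{32}$ by the formulas of Lemmas \ref{lem:13-23}--\ref{lem:11-12}, defining $C_{23}$ and $C_{22}$ from $C_{32}$ by antisymmetry and \eqref{eq:normc}, and taking $C_{14}$ to be the antisymmetric partner of $C_{41}$. By construction $\omega$ has the form \eqref{eq:omeganorm} and satisfies \eqref{eq:normc}, so it is normalized. It remains to verify multiplicativity, $\delta \omega = 0$. As $\delta \omega$ is a constant $2$-form on $V_3$, it suffices to evaluate it on all pairs of summands of the seven-fold decomposition of $V_3$ and confirm that each vanishes. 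This is the main obstacle: a finite but bookkeeping-heavy computation whose content is precisely that Lemmas \ref{lem:13-23}--\ref{lem:11-12} together with \eqref{eq:c1423} are not only necessary but sufficient to annihilate every component of $\delta \omega$. I expect each slot-pairing, after applying \eqref{eq:face30}--\eqref{eq:face33}, to reduce either to one of the already-established relations or to a tautology, so that no additional constraint on $(C_{41}, C_{32})$ arises; the mutual-inverse property of the two assignments is then immediate from the reconstruction formulas.
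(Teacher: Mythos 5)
Your proposal is correct and follows essentially the same route as the paper: use Lemmas \ref{lem:13-23}--\ref{lem:11-12} and \eqref{eq:normc} to see that $\omega$ is determined by $(C_{41},C_{32})$, extract \eqref{eq:c1423} from one further instance of $\delta\omega=0$ on $V_3$ (your test vector $v=(w_2,0,0,0,0,0,0)$ differs from the paper's $(0,0,w_2,0,0,0,0)$ but yields the same relation after a slightly longer computation), and for the converse reconstruct $\omega$ and verify multiplicativity by the same long-but-straightforward check. The only point worth adding is the paper's explicit observation that the skew-symmetry of the diagonal block $C_{11}$ is not automatic from the reconstruction formula but requires \eqref{eq:c1423} (applied with $w_1=\partial w_2'$ and $\partial\circ\partial=0$).
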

\begin{proof}
	In the discussion above, we have already seen how to obtain $C_{41}$ and $C_{32}$ from $\omega$. To see that \eqref{eq:c1423} holds, let $u,v \in V_3$ be defined as $u = (0,0,0,w_1,0,0,0)$ and $v = (0,0, w_2, 0,0,0,0)$. Then
	\begin{equation}
		\begin{split}
		0 = \delta \omega(u,v) &= -\omega \big( (0,-w_1,-w_1,-\partial w_1), (w_2,0,0,0) \big)\\
		&= -C_{12}(w_2,w_1) - C_{13}(w_2,w_1) + C_{41}(\partial w_1, w_2).
		\end{split}
	\end{equation}
	Equation \eqref{eq:c1423} then follows from Lemmas \ref{lem:12-23} and \ref{lem:11-12}.
	
	In the other direction, given $C_{41}$ and $C_{32}$ satisfying \eqref{eq:c1423}, we can construct $\omega$ of the form \eqref{eq:omeganorm}, with the other components given by \eqref{eq:normc} and Lemmas \ref{lem:13-23}--\ref{lem:11-12}. The skew-symmetry of $C_{22}$ follows from \eqref{eq:normc}, and the skew-symmetry of $C_{11}$ follows from Lemma \ref{lem:11-12} and \eqref{eq:c1423}. We have already observed that such an $\omega$ will be normalized. It is long but straightforward to check that $\omega$ is multiplicative.
\end{proof}

\subsection{Simplicial nondegeneracy}
\label{subsec:nondeg}
Let $V_\bullet$ be a linear $2$-groupoid equipped with a constant $2$-form $\omega \in \Omega^2_\nu(V_2)$. We will now describe the pairings $A_\omega$ and $B_\omega$ from Section \ref{subsec:lie2} in terms of the components in \eqref{eq:omeganorm}.

Recall from Remark \ref{rmk:wtangent} that the $3$-term complex $(W, \partial)$ associated to $V_\bullet$ is isomorphic to the tangent complex of $V_\bullet$. Thus we can view $A_\omega$ and $B_\omega$ as the restrictions of \eqref{eqn:tildeA} and \eqref{eqn:tildeB}, respectively, to $W_i$. Therefore
\begin{align*}
	A_\omega(w_0,w_2) &= \omega \big( (0,0,0,w_0), (w_2,0,0,0) \big) = C_{41}(w_0,w_2),\\
	\begin{split}
		B_\omega(w_1,w_1') &= \omega \big( (0,0,w_1,0),(0,w_1',w_1',0) \big) + \omega \big( (0,0,w_1',0),(0,w_1,w_1,0) \big) \\
		&= C_{32}(w_1,w_1') + C_{32}(w_1',w_1),
	\end{split}
\end{align*}
for $w_0 \in W_0$, $w_1,w_1' \in W_1$, and $w_2 \in W_2$. The following result is immediate.
\begin{prop}
\label{prop:nondegeneracy} $\omega$ is simplicially nondegenerate if and only if $C_{41}$ and the symmetric part of $C_{32}$ are both nondegenerate. 
\end{prop}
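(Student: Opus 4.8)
The plan is to read the statement directly off the component formulas for $A_\omega$ and $B_\omega$ that were just established. From those formulas, $A_\omega(w_0, w_2) = C_{41}(w_0, w_2)$, so $A_\omega$ and $C_{41}$ are literally the same pairing between $W_0$ and $W_2$; hence one is nondegenerate exactly when the other is. Likewise, $B_\omega(w_1, w_1') = C_{32}(w_1, w_1') + C_{32}(w_1', w_1)$, which is twice the symmetric part of $C_{32}$. Since rescaling a bilinear form by a nonzero constant does not change its kernel, $B_\omega$ is nondegenerate precisely when the symmetric part of $C_{32}$ is. Combining these two observations with Definition \ref{dfn:nondegeneracy}, which declares $\omega$ simplicially nondegenerate exactly when both $A_\omega$ and $B_\omega$ are nondegenerate, yields both directions of the stated equivalence simultaneously.

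There is no genuine obstacle here, which is why the result can be stated as immediate; the only points worth a moment's care are bookkeeping. First, one should keep in mind that nondegeneracy of $A_\omega = C_{41}$ refers to a pairing between the \emph{two distinct} spaces $W_0$ and $W_2$, so it means that the induced maps $W_0 \to W_2^*$ and $W_2 \to W_0^*$ are both injective (equivalently, isomorphisms), which in particular forces $\dim W_0 = \dim W_2$. Second, because $\omega$ is constant, the condition ``at all $x \in X_0$'' appearing in Definition \ref{dfn:nondegeneracy} collapses to a single condition, so nothing beyond the pointwise computation is required. With these remarks in place, the equivalence is just the definition of simplicial nondegeneracy rewritten in terms of $C_{41}$ and $C_{32}$.
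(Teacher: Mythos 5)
Your proof is correct and takes essentially the same route as the paper: the paper computes $A_\omega(w_0,w_2)=C_{41}(w_0,w_2)$ and $B_\omega(w_1,w_1')=C_{32}(w_1,w_1')+C_{32}(w_1',w_1)$ immediately before the proposition and then declares the result immediate. Your added remarks about nondegeneracy of a pairing between two distinct spaces and the harmlessness of the factor of $2$ are accurate bookkeeping that the paper leaves implicit.
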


\begin{remark}\label{rmk:symmetricnondegen}
	Proposition \ref{prop:nondegeneracy} allows us to clearly see the difference between simplicial nondegeneracy and the ordinary notion of nondegeneracy for $2$-forms. From \eqref{eq:omeganorm}, it is clear that $\omega$ is nondegenerate in the ordinary sense if and only if $C_{41}$ and $C_{32}$ are both nondegenerate. Thus the difference is that ordinary nondegeneracy considers $C_{32}$ in its entirety, whereas simplicial nondegeneracy only considers the symmetric part of $C_{32}$. In the case where $C_{32}$ is symmetric, the two notions agree; however, since $C_{32}$ need not be symmetric in general, it is easy to find examples of $2$-forms that are simplicially nondegenerate but not nondegenerate, and vice versa. 
\end{remark}

\subsection{A minimal description of constant symplectic $2$-groupoids}

Putting Theorem \ref{thm:multiplicative} and Proposition \ref{prop:nondegeneracy} together, we see that there is a one-to-one correspondence between constant symplectic $2$-groupoids and $3$-term chain complexes $(W_\bullet, \partial)$ equipped with a nondegenerate bilinear pairing $C_{41}$ of $W_0$ with $W_2$ and a bilinear form $C_{32}$ on $W_1$ whose symmetric part is nondegenerate, satisfying \eqref{eq:c1423}. Using the nondegeneracy of the pairings, we can further simplify the description.

\begin{thm}\label{thm:chaincorrespondence}
 There is a one-to-one correspondence between constant symplectic $2$-groupoids and tuples $(W_1, W_0, \langle \cdot, \cdot \rangle, \partial, r)$, where 
 \begin{itemize}
  \item $W_1$ and $W_0$ are vector spaces, 
  \item $\langle \cdot, \cdot \rangle$ is a nondegenerate symmetric bilinear form on $W_1$, 
  \item $\partial: W_1 \to W_0$ is a linear map such that the image of $\partial^*$ in $W_1^* \cong W_1$ is isotropic, and
  \item $r$ is an element of $\wedge^2 W_1^*$.
 \end{itemize}
\end{thm}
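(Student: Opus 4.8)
The plan is to repackage the data already produced just before the statement. By Theorem \ref{thm:multiplicative} together with Proposition \ref{prop:nondegeneracy}, a constant symplectic $2$-groupoid is the same thing as a $3$-term complex $W_2 \xrightarrow{\partial_2} W_1 \xrightarrow{\partial_1} W_0$ equipped with a nondegenerate pairing $C_{41}$ of $W_0$ with $W_2$ and a bilinear form $C_{32}$ on $W_1$ whose symmetric part is nondegenerate, subject to the relation \eqref{eq:c1423}. The theorem asserts that, after eliminating $W_2$ and $C_{41}$, this is equivalent to the tuple $(W_1, W_0, \langle \cdot, \cdot \rangle, \partial, r)$. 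I will therefore exhibit mutually inverse constructions between the two collections of data.

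First I would use the nondegeneracy of $C_{41}$ to identify $W_2 \cong W_0^*$, and set $\partial := \partial_1 : W_1 \to W_0$. Under this identification $\partial_2$ becomes a map $W_0^* \to W_1$. Next I decompose $C_{32}$ into its symmetric and skew-symmetric parts, taking $\langle \cdot, \cdot \rangle$ to be the symmetric pairing $B_\omega$ (nondegenerate by hypothesis, up to a harmless factor of two) and $r \in \wedge^2 W_1^*$ the skew part. The key observation is that the right-hand side of \eqref{eq:c1423} is $C_{32}(\partial w_2, w_1) + C_{32}(w_1, \partial w_2)$, in which the skew parts cancel; hence it equals $\langle \partial_2 w_2, w_1 \rangle$ and $r$ drops out of the constraint entirely. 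This is precisely why $r$ survives as a completely free parameter in the tuple.

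The heart of the argument is then the following. Writing \eqref{eq:c1423} under $W_2 \cong W_0^*$, the left-hand side becomes $(\partial^* \xi)(w_1)$, where $\xi \in W_0^*$ corresponds to $w_2$ and $\partial^* : W_0^* \to W_1^*$ is the transpose of $\partial$. Nondegeneracy of $\langle \cdot, \cdot \rangle$ then forces $\partial_2 = \sharp \circ \partial^*$, where $\sharp : W_1^* \to W_1$ is the musical isomorphism of $\langle \cdot, \cdot \rangle$; thus $\partial_2$ is no longer independent data but is determined by $(\partial, \langle \cdot, \cdot \rangle)$. Finally, using the identity $\langle \sharp \partial^* \xi, \sharp \partial^* \eta \rangle = \xi(\partial \sharp \partial^* \eta)$ and the fact that $W_0^*$ separates points of $W_0$, the chain-complex condition $\partial_1 \partial_2 = 0$ is seen to be equivalent to the vanishing of $\langle \sharp \partial^* \xi, \sharp \partial^* \eta \rangle$ for all $\xi, \eta$, i.e.\ to the image of $\partial^*$ in $W_1^* \cong W_1$ being isotropic for $\langle \cdot, \cdot \rangle$. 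This is exactly the isotropy hypothesis in the tuple, so the condition $\partial_1\partial_2 = 0$ is faithfully encoded.

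For the reverse construction I would, given $(W_1, W_0, \langle \cdot, \cdot \rangle, \partial, r)$, set $W_2 := W_0^*$, let $C_{41}$ be the canonical pairing, define $\partial_2 := \sharp \partial^*$, and reconstitute $C_{32}$ from $\langle \cdot, \cdot \rangle$ and $r$; the isotropy hypothesis yields $\partial_1 \partial_2 = 0$, $C_{41}$ is nondegenerate, the symmetric part of $C_{32}$ is $\langle \cdot, \cdot \rangle$, and \eqref{eq:c1423} holds by running the computation of the previous paragraph in reverse. Applying Theorem \ref{thm:multiplicative} and Proposition \ref{prop:nondegeneracy} then returns a constant symplectic $2$-groupoid, and the two constructions are visibly inverse. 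The only genuine content is the equivalence between $\partial_1\partial_2 = 0$ and the isotropy of the image of $\partial^*$; everything else is bookkeeping. I expect the main source of difficulty to be purely clerical: keeping the two identifications ($W_2 \cong W_0^*$ via $C_{41}$ and $W_1 \cong W_1^*$ via $\langle \cdot, \cdot \rangle$) and the attendant transpose and musical maps mutually consistent, and tracking the normalization (the factor of two) relating $\langle \cdot, \cdot \rangle$, $B_\omega$, and the symmetric part of $C_{32}$.
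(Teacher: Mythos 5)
Your proposal is correct and follows essentially the same route as the paper's proof: identify $W_2$ with $W_0^*$ via the nondegenerate pairing $C_{41}$, split $C_{32}$ into its symmetric part $\tfrac12\langle\cdot,\cdot\rangle$ and skew part $r$, and observe that \eqref{eq:c1423} forces $\partial_2 = \partial^*$ under the musical identification while leaving $r$ unconstrained, with $\partial_1\partial_2=0$ translating into isotropy of the image of $\partial^*$. The paper only writes out the reverse construction and dismisses bijectivity as an easy check; your argument supplies exactly those omitted details, and they are right.
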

\begin{proof}
Given the data $(W_1, W_0, \langle \cdot, \cdot \rangle, \partial, r)$, the corresponding $3$-term chain complex is
\begin{equation}\label{eqn:3constant}
  W_0^* \xrightarrow{\partial^*} W_1 \xrightarrow{\partial} W_0,
\end{equation}
where we are implicitly using $\langle \cdot, \cdot \rangle$ to identify $W_1$ with $W_1^*$. The equation $\partial \circ \partial^* = 0$ is equivalent to the requirement that the image of $\partial^*$ be isotropic. We take $C_{41}$ to be the canonical pairing of $W_0^*$ with $W_0$, and we set $C_{32} := \frac{1}{2}\langle \cdot, \cdot \rangle + r$. The equation \eqref{eq:c1423} automatically holds. One can easily check that this gives a one-to-one correspondence.
\end{proof}

\subsection{Equivalences}
\label{subsec:equiv}
In this subsection, we will describe equivalences between constant symplectic $2$-groupoids in terms of the description given in Theorem \ref{thm:chaincorrespondence}.

From Definition \ref{dfn:equivalence} and Remark \ref{rmk:equivalence}, we can see that any equivalence between constant symplectic $2$-groupoid structures on a linear $2$-groupoid $V_\bullet$ is given by a $2$-form $\alpha \in \Omega^2_\nu(V_1)$ satisfying \eqref{eqn:Adelta} and \eqref{eqn:Bdelta}. Since the simplicial coboundary operator $\delta$ is linear, we may assume without loss of generality that $\alpha$ is constant.

Let $\alpha$ be a constant normalized $2$-form on $V_1$. In terms of the decomposition $V_1 = W_1 \oplus W_0$, we can write $\alpha$ in block form as
\[ \alpha = \begin{bmatrix} B_{11} & B_{12} \\ B_{21} & 0 \end{bmatrix},\]
where $B_{12}(w_1, w_0) = -B_{21}(w_0, w_1)$. The vanishing of the lower right block is a consequence of the assumption that $\alpha$ is normalized. A straightforward calculation then shows that \eqref{eqn:Adelta} and \eqref{eqn:Bdelta} reduce in this case to the conditions
\begin{gather}
B_{21}(w_0, \partial w_2) = 0, \label{eqn:alpha1}\\
B_{21}(\partial w_1, w_1') + B_{21}(\partial w_1', w_1) = 0, \label{eqn:alpha2}
\end{gather}
for all $w_2 \in W_2$, $w_1, w_1' \in W_0$, and $w_0 \in W_0$.

Since $\delta \alpha$ is obviously multiplicative, it has block form
\[ \delta \alpha = \begin{bmatrix} A_{11} & A_{12} & A_{13} & A_{14} \\ A_{21} & A_{22} & A_{23} & 0 \\ A_{31} & A_{32} & 0 & 0 \\ A_{41} & 0 & 0 & 0 \end{bmatrix}.\]
By Theorem \ref{thm:multiplicative}, $\delta \alpha$ is completely determined by $A_{41}$ and $A_{32}$. We calculate
\begin{equation*}
\begin{split}
A_{41}(w_0, w_2) &= \delta \alpha \big( (0,0,0,w_0), (w_2,0,0,0) \big)\\
&= \alpha \big((0,w_2),(\partial w_2,0) \big) \\
&= B_{21}(w_0,\partial w_2),
\end{split}
\end{equation*}
\begin{equation*}
\begin{split}
A_{32}(w_1, w_1') &= \delta \alpha \big( (0,0,w_1,0), (0,w_1',0,0) \big) \\
&= \alpha \big( (w_1, 0), (-w_1', -\partial w_1') \big) \\
&= -B_{11}(w_1,w_1') - B_{12}(w_1, \partial w_1').
\end{split}
\end{equation*}
From this, we see that equation \eqref{eqn:alpha1} holds if and only if $A_{41} = 0$, and that \eqref{eqn:alpha2} holds if and only if $A_{32}$ is skew-symmetric. Furthermore, we observe that there is no restriction on $B_{11}$, so every skew-symmetric pairing on $W_1$ appears as $A_{32}$ for some choice of $\alpha$ satisfying $\eqref{eqn:alpha1}$ and $\eqref{eqn:alpha2}$.

It is clear from the above discussion that, in terms of the data of Theorem \ref{thm:chaincorrespondence}, equivalences between constant symplectic groupoids act transitively on $r$ and do not affect any of the other data. Thus we have the following result.
\begin{thm}\label{thm:constantequivalence}
 There is a one-to-one correspondence between equivalence classes of constant symplectic $2$-groupoids and tuples $(W_1, W_0, \langle \cdot, \cdot \rangle, \partial)$, where 
 \begin{itemize}
  \item $W_1$ and $W_0$ are vector spaces, 
  \item $\langle \cdot, \cdot \rangle$ is a nondegenerate symmetric bilinear form on $W_1$, and
  \item $\partial: W_1 \to W_0$ is a linear map such that the image of $\partial^*$ in $W_1^* \cong W_1$ is isotropic.
 \end{itemize}
\end{thm}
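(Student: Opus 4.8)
The plan is to build directly on Theorem \ref{thm:chaincorrespondence}, which already realizes constant symplectic $2$-groupoids as tuples $(W_1, W_0, \langle\cdot,\cdot\rangle, \partial, r)$ with $r \in \wedge^2 W_1^*$. Passing to equivalence classes should amount to quotienting out the parameter $r$, so the entire content is to verify that the equivalence relation of Definition \ref{dfn:equivalence} fixes $(W_1, W_0, \langle\cdot,\cdot\rangle, \partial)$ and acts transitively on $r$. Since an equivalence relates two $2$-forms on a single linear $2$-groupoid $V_\bullet$, and $V_\bullet$ is recovered from the chain complex \eqref{eqn:3constant}, the data $W_0$, $W_1$, $\partial$ (together with the identification $W_2 \cong W_0^*$) are automatically common to equivalent structures; what remains is to track the effect of an equivalence on the two generating components $C_{41}$ and $C_{32}$.

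First I would use the linearity of $\delta$ to reduce to a constant normalized $\alpha \in \Omega^2_\nu(V_1)$, for which, by Remark \ref{rmk:equivalence}, the conditions $A_{\delta\alpha} = B_{\delta\alpha} = 0$ of \eqref{eqn:Adelta}--\eqref{eqn:Bdelta} become the constant-coefficient relations \eqref{eqn:alpha1}--\eqref{eqn:alpha2}. Writing $\alpha$ in block form with free blocks $B_{11}$ and $B_{12}$ (the lower-right block vanishing by normalization, and $B_{21}$ determined by $B_{12}$), I would then compute, using Theorem \ref{thm:multiplicative} (which guarantees $\delta\alpha$ is determined by its components $A_{41}$ and $A_{32}$), explicit formulas for $A_{41}$ and $A_{32}$ in terms of $B_{11}, B_{12}, B_{21}$ and $\partial$. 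The expected outcome is that $A_{41}$ depends only on $B_{21}$ and $\partial$, while $A_{32} = -B_{11} - B_{12}(\,\cdot\,,\partial\,\cdot\,)$.

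From these formulas I would read off that \eqref{eqn:alpha1} is equivalent to $A_{41} = 0$ and \eqref{eqn:alpha2} to $A_{32}$ being skew-symmetric, and that, because $B_{11}$ is entirely unconstrained, every skew-symmetric form on $W_1$ occurs as $A_{32}$ for some admissible $\alpha$. Consequently an equivalence leaves $C_{41}$ untouched, preserves the symmetric part $\frac{1}{2}\langle\cdot,\cdot\rangle$ of $C_{32}$, and modifies its skew part $r$ by an arbitrary element of $\wedge^2 W_1^*$. This shows simultaneously that the assignment of $(W_1, W_0, \langle\cdot,\cdot\rangle, \partial)$ to a constant symplectic $2$-groupoid descends to equivalence classes and that it is injective; surjectivity is immediate from Theorem \ref{thm:chaincorrespondence} by choosing any $r$, so the descended map is the desired bijection.

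The step I expect to carry the real weight is the realizability claim: not merely that $r$ can be perturbed, but that it can be moved by an arbitrary skew form while the constraints \eqref{eqn:alpha1}--\eqref{eqn:alpha2} on $\alpha$ are respected and $C_{41}$, $\langle\cdot,\cdot\rangle$, and $\partial$ are left invariant. This hinges on checking that the constraints restrict only $B_{21}$ (through $\partial$) and leave $B_{11}$ free, so that $A_{32}$ sweeps out all of $\wedge^2 W_1^*$; once this is confirmed, the remaining invariance statements are routine bookkeeping.
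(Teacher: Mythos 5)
Your proposal is correct and follows essentially the same route as the paper: reduce to a constant normalized $\alpha$ by linearity of $\delta$, write $\alpha$ in block form with the lower-right block vanishing by normalization, compute $A_{41}(w_0,w_2)=B_{21}(w_0,\partial w_2)$ and $A_{32}(w_1,w_1')=-B_{11}(w_1,w_1')-B_{12}(w_1,\partial w_1')$, and observe that the constraints \eqref{eqn:alpha1}--\eqref{eqn:alpha2} say exactly that $A_{41}=0$ and $A_{32}$ is skew-symmetric while $B_{11}$ remains free, so equivalences fix $(W_1,W_0,\langle\cdot,\cdot\rangle,\partial)$ and act transitively on $r$. The step you flag as carrying the real weight (that $A_{32}$ sweeps out all of $\wedge^2 W_1^*$ because $B_{11}$ is unconstrained) is precisely the observation the paper makes.
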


\begin{remark}\label{rmk:symmetric}
In each equivalence class of constant symplectic $2$-groupoids, there is exactly one representative for which $C_{32}$ is symmetric (or equivalently, in terms of the data of Theorem \ref{thm:chaincorrespondence}, where $r=0$). In this case, we will say that the constant symplectic $2$-groupoid is \emph{symmetric}. From Remark \ref{rmk:symmetricnondegen}, we can see that if $(V_\bullet, \omega)$ is a symmetric constant symplectic $2$-groupoid, then $\omega \in \Omega^2(V_2)$ is genuinely nondegenerate, and therefore $V_2$ is genuinely symplectic.
\end{remark}

\section{Integration of constant Courant algebroids}\label{sec:constcourant}
In this section, we will describe a simple class of Courant algebroids that we call \emph{constant Courant algebroids}. We will see that constant Courant algebroids are in correspondence with equivalence classes of constant symplectic $2$-groupoids.

\subsection{Constant Courant algebroids}
We first recall the definition of Courant algebroid.
\begin{definition}\label{def:courant}
A \emph{Courant algebroid} is a vector bundle $E \to M$ equipped with a nondegenerate symmetric bilinear form $\langle \cdot, \cdot \rangle$, a bundle map $\rho: E \to TM$ (called the \emph{anchor}), and a bracket $\cbrack{\cdot}{\cdot}$
(called the \emph{Courant bracket}) on $\Gamma(E)$ such that
\begin{enumerate}
\item $\cbrack{e_1}{fe_2} = \rho(e_1)(f) e_2 + f\cbrack{e_1}{e_2}$,
\item $\rho(e_1)(\langle e_2, e_3 \rangle) = \langle \cbrack{e_1}{e_2}, e_3 \rangle + \langle e_2, \cbrack{e_1}{e_3} \rangle$,
\item $\cbrack{\cbrack{e_1}{e_2}}{e_3} = \cbrack{e_1}{\cbrack{e_2}{e_3}} - \cbrack{e_2}{\cbrack{e_1}{e_3}}$,
\item $\cbrack{e_1}{e_2} + \cbrack{e_2}{e_1} = \cD \langle e_1, e_2 \rangle$,
\end{enumerate}
for all $f \in C^\infty(M)$ and $e_i \in \Gamma(E)$, where $\cD : C^\infty(M) \to \Gamma(E)$ is defined by
\begin{equation*}
\langle \cD f, e \rangle = \rho(e)(f).
\end{equation*}
\end{definition}
A well-known but important consequence of Definition \ref{def:courant} is that, for any Courant algebroid $E \to M$, the sequence
\begin{equation}\label{eqn:3courant}
  T^*M \xrightarrow{\rho^*} E^* \cong E \xrightarrow{\rho} TM 
\end{equation}
is a $3$-term chain complex of vector bundles over $M$. One should expect that a symplectic $2$-groupoid integrating $E$ should be such that its tangent complex is isomorphic (or at least quasi-isomorphic) to \eqref{eqn:3courant}.

\begin{definition}\label{def:constantcourant}
	A \emph{constant Courant algebroid} is a Courant algebroid of the form $W_1 \times W_0 \to W_0$, where $W_0$ and $W_1$ are vector spaces, such that
	\begin{enumerate}
	 \item the pairing $\langle \cdot, \cdot \rangle$ is independent of the basepoint in $W_0$,
	 \item the anchor $\rho: W_1 \times W_0 \to TW_0 = W_0 \times W_0$ is independent of the basepoint, and
	 \item $\cbrack{w_1}{w_1'} = 0$ for all $w_1, w_1' \in W_1$, viewed as constant sections.
	\end{enumerate}
\end{definition}
Let $E = W_1 \times W_0 \to W_0$ be a constant Courant algebroid. The space of sections $\Gamma(E)$ can be naturally identified with $C^\infty(W_0) \otimes W_1$. Any bilinear pairing on $\Gamma(E)$ is completely determined by its restriction to constant sections, which gives a symmetric map $W_1 \otimes W_1 \to C^\infty(W_0)$. The requirement that the pairing be independent of basepoint implies the image of this map consists of constant functions; in other words, the pairing is given by a nondegenerate symmetric bilinear form on $W_1$. Similarly, the requirement that the anchor be independent of basepoint implies that it is given by 
\begin{equation}\label{eqn:anchor}
\rho(w_1, w_0) = (\partial w_1, w_0)
\end{equation}
for some linear map $\partial: W_1 \to W_0$. 

The fact that $\rho \circ \rho^* = 0$ (see \eqref{eqn:3courant}) implies that $\partial \circ \partial^* = 0$ or, equivalently, the image of $\partial^*$ in $W_1^* \cong W_1$ is isotropic.

The vanishing of the bracket of constant sections, together with axioms (1) and (4) in Definition \ref{def:courant}, imply that the bracket is completely determined by the pairing and anchor. Specifically,
\begin{equation}\label{eqn:bracket}
\cbrack{fw_1}{gw_1'} = f \rho(w_1) (g) w_1' - g \rho(w_1')(f) w_1 + g \langle w_1, w_1' \rangle \cD f
\end{equation}
for $f,g \in C^\infty(W_0)$ and $w_1, w_1' \in W_1$.

\begin{thm}\label{thm:correspondence}
There is a one-to-one correspondence between constant Courant algebroids and equivalence classes of constant symplectic $2$-groupoids.
\end{thm}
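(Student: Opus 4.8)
The plan is to exhibit the correspondence by composing the two descriptions we have already built up. By Theorem \ref{thm:constantequivalence}, equivalence classes of constant symplectic $2$-groupoids are in bijection with tuples $(W_1, W_0, \langle \cdot, \cdot \rangle, \partial)$ where $\langle \cdot, \cdot \rangle$ is a nondegenerate symmetric bilinear form on $W_1$ and $\partial : W_1 \to W_0$ is linear with $\operatorname{im} \partial^*$ isotropic. So it suffices to show that a constant Courant algebroid $E = W_1 \times W_0 \to W_0$ determines, and is determined by, exactly such a tuple. The discussion immediately preceding the theorem already does most of the work: it shows that the pairing on $E$ reduces to a nondegenerate symmetric form $\langle \cdot, \cdot \rangle$ on $W_1$, that the anchor reduces to a linear map $\partial : W_1 \to W_0$ via \eqref{eqn:anchor}, that $\rho \circ \rho^* = 0$ forces $\operatorname{im} \partial^*$ to be isotropic, and that the bracket is then forced by axioms (1) and (4) to be given by \eqref{eqn:bracket}. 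Thus from a constant Courant algebroid I read off exactly the tuple $(W_1, W_0, \langle \cdot, \cdot \rangle, \partial)$.

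First I would make the forward map precise: send a constant Courant algebroid to the tuple $(W_1, W_0, \langle \cdot, \cdot \rangle, \partial)$ extracted above, and hence (via Theorem \ref{thm:constantequivalence}) to an equivalence class of constant symplectic $2$-groupoids. For the reverse map, given such a tuple I define $E := W_1 \times W_0 \to W_0$, equip it with the constant pairing $\langle \cdot, \cdot \rangle$, the anchor $\rho(w_1, w_0) = (\partial w_1, w_0)$, and the bracket \eqref{eqn:bracket}, and declare constant sections to bracket to zero. These two maps are manifestly inverse to each other at the level of the data, since the tuple is literally the complete set of free parameters in both descriptions; so once I check that \eqref{eqn:bracket} genuinely defines a Courant algebroid, bijectivity is immediate.

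The one substantive point — and the step I expect to be the main obstacle — is verifying that the operations defined from an arbitrary tuple $(W_1, W_0, \langle \cdot, \cdot \rangle, \partial)$ with $\operatorname{im}\partial^*$ isotropic actually satisfy all four axioms of Definition \ref{def:courant}. Axioms (1) and (4) hold essentially by construction, since \eqref{eqn:bracket} was derived from them. Axiom (2), the compatibility of $\rho$ with $\langle \cdot, \cdot \rangle$, and axiom (3), the Jacobi/Leibniz identity for $\cbrack{\cdot}{\cdot}$, must be checked directly; these are multilinear identities in $f, g \in C^\infty(W_0)$ and $w_1, w_1' \in W_1$, and the key input is precisely the isotropy condition $\partial \circ \partial^* = 0$, which is what makes the $\cD$-term in \eqref{eqn:bracket} consistent. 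This is a finite, if slightly tedious, computation: expand both sides of each axiom using \eqref{eqn:anchor} and \eqref{eqn:bracket}, use the definition $\langle \cD f, e \rangle = \rho(e)(f)$ to rewrite $\cD$, and collect terms. I would present this as a direct verification, noting that isotropy of $\operatorname{im}\partial^*$ is exactly what is needed for the Jacobi identity to close, and otherwise leave the routine bookkeeping to the reader.
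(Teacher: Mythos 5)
Your proposal matches the paper's proof essentially verbatim: both directions of the correspondence are obtained by passing through the tuple $(W_1, W_0, \langle \cdot, \cdot \rangle, \partial)$ of Theorem \ref{thm:constantequivalence}, with the reverse direction defined via \eqref{eqn:anchor} and \eqref{eqn:bracket}, and the verification of the Courant axioms declared a long but straightforward check (the paper likewise omits it, remarking only that it can also be done quickly in the supergeometric formulation). Your additional observation that isotropy of $\operatorname{im}\partial^*$ is the essential input for the axioms to close is a correct and slightly more informative gloss than the paper provides.
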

\begin{proof}
Given a constant Courant algebroid, we have seen how to obtain the data $(W_1, W_0, \langle \cdot, \cdot \rangle, \partial)$ for an equivalence class of  constant symplectic $2$-groupoids, as described in Theorem \ref{thm:constantequivalence}. On the other hand, given the data $(W_1, W_0, \langle \cdot, \cdot \rangle, \partial)$, we can construct the constant Courant algebroid $E = W_1 \times W_0 \to W_0$, where the pairing on $\Gamma(E)$  agrees on constant sections with the pairing on $W_0$, and the anchor and bracket are given by \eqref{eqn:anchor} and \eqref{eqn:bracket}. It is a long but straightforward check that the conditions of Definition \ref{def:courant} hold. (We note that, alternatively, the Courant algebroid axioms can be checked quickly using the supergeometric formulation of \cite{dmitry:graded} in local coordinates.)
\end{proof}

\section{Integration of constant Dirac structures}\label{sec:dirac}

\subsection{Linear sub-$2$-groupoids}

Let $V_\bullet$ be a linear $2$-groupoid. A \emph{linear sub-$2$-groupoid} of $V_\bullet$ is a simplicial subspace $L_\bullet \subseteq V_\bullet$ that is also a $2$-groupoid. 

Let $L_\bullet \subseteq V_\bullet$ be a linear sub-$2$-groupoid. Set $U_0 := L_0$, $U_1 := \ker f_0^1|_{L_1}$, and $U_2 := \ker \lambda_{2,2}|_{L_2}$. By comparing with the constructions of Section \ref{subsec:simplicial}, we can see that $U_\bullet$ is a subcomplex of $(W_\bullet, \partial)$. Conversely, given a subcomplex $U_\bullet \subseteq (W_\bullet, \partial)$, we can form a linear sub-$2$-groupoid $L_\bullet$, where $L_0 = U_0$, $L_1 = U_1 \oplus U_0$, and $L_2 = U_2 \oplus U_1 \oplus U_1 \oplus U_0$. This gives us the following result.

\begin{prop}\label{prop:sub2gpd}
There is a one-to-one correspondence between linear sub-$2$-groupoids $L_\bullet \subseteq V_\bullet$ and subcomplexes $U_\bullet \subseteq (W_\bullet, \partial)$.
\end{prop}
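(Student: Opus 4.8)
The plan is to prove Proposition \ref{prop:sub2gpd} by establishing the correspondence in both directions and checking that the two constructions are mutually inverse. The essential point is that the decompositions $V_1 = W_1 \oplus W_0$ and $V_2 = W_2 \oplus W_1 \oplus W_1 \oplus W_0$ from Section \ref{subsec:simplicial}, together with the explicit formulas \eqref{eq:face30}--\eqref{eq:face33} for the face maps and \eqref{eq:degen1}--\eqref{eq:degen2} for the degeneracies, reduce everything about the simplicial structure to the single boundary map $\partial$ on $(W_\bullet, \partial)$. So a linear sub-$2$-groupoid is nothing more than a choice of subspaces closed under $\partial$.

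First I would treat the forward direction. Given a linear sub-$2$-groupoid $L_\bullet \subseteq V_\bullet$, I set $U_0 := L_0$, $U_1 := \ker f_0^1|_{L_1}$, and $U_2 := \ker \lambda_{2,2}|_{L_2}$, exactly mirroring the definitions $W_q = \ker$ of the horn maps in Section \ref{subsec:simplicial}. The key claim is that $L_1 = U_1 \oplus U_0$ and $L_2 = U_2 \oplus U_1 \oplus U_1 \oplus U_0$ as subspaces of $V_1$ and $V_2$. This follows because the splittings $\sigma_0^0$ and $\Phi_2$ restrict to $L_\bullet$: since $L_\bullet$ is a simplicial subspace, it is closed under the degeneracy maps, and since $\Phi_2$ is built from $\sigma_0^1, \sigma_1^1$ by \eqref{eqn:Phi2}, the image $\Phi_2(\Lambda_{2,2}L)$ lies in $L_2$. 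One then verifies that $U_\bullet$ is closed under $\partial$: using \eqref{eqn:partial2}, $\partial_2$ sends $U_2 = \ker \lambda_{2,2}|_{L_2}$ into $U_1$ because $f_2^2$ preserves $L_\bullet$, and similarly $\partial_1$ maps $U_1$ into $U_0$ via $f_1^1$. Thus $U_\bullet$ is a subcomplex of $(W_\bullet, \partial)$.

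Next I would treat the reverse direction. Given a subcomplex $U_\bullet \subseteq (W_\bullet, \partial)$, define $L_0 := U_0$, $L_1 := U_1 \oplus U_0$, and $L_2 := U_2 \oplus U_1 \oplus U_1 \oplus U_0$, viewed inside the corresponding decompositions of $V_0, V_1, V_2$. The main thing to check is that $L_\bullet$ is a simplicial subspace, i.e.\ closed under all face and degeneracy maps. For the low-degree maps this is transparent from the explicit formulas: $f_0^2, f_1^2$ project onto $U_1 \oplus U_0 = L_1$, the degeneracies \eqref{eq:degen1}--\eqref{eq:degen2} land in $L_2$, and $f_2^2$ lands in $L_1$ precisely because $\partial_2 U_2 \subseteq U_1$ and $\partial_1 U_1 \subseteq U_0$. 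One also needs $L_3 := U_2^{\oplus 3} \oplus U_1^{\oplus 3} \oplus U_0$ to be carried correctly by the face maps \eqref{eq:face30}--\eqref{eq:face33}, which again reduces to $\partial_2 U_2 \subseteq U_1$ and $\partial_1 U_1 \subseteq U_0$. Finally one must confirm that $L_\bullet$ is itself a Lie $2$-groupoid: since the horn maps for $V_\bullet$ are the linear isomorphisms $\lambda_{q,q}$ for $q > 2$ built from $\Phi_q$, and these restrict to isomorphisms on $L_\bullet$ by the same splitting argument, $L_\bullet$ inherits the horn-filling conditions. That the two constructions invert one another is immediate from the definitions $U_q = \ker(\text{horn map})|_{L_q}$.

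The main obstacle is verifying closure of $L_\bullet$ under the \emph{higher} structure maps and confirming it is a genuine sub-$2$-groupoid rather than merely a simplicial subspace in low degrees. The cleanest way around this is to invoke the observation at the end of Section \ref{subsec:simplicial} that, for a Lie $2$-groupoid, the higher simplicial maps are completely determined by the data in degrees $\leq 2$ via the horn inverses $\Phi_q$; since each $\Phi_q$ is assembled from degeneracies and the identified horn components, it maps $\Lambda_{q,q}L$ into $L_q$ whenever $U_\bullet$ is $\partial$-closed. This lets me bypass case-by-case checking of every face and degeneracy in all degrees and conclude the correspondence from the degree-$\leq 2$ data alone.
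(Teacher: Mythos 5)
Your proposal is correct and follows essentially the same route as the paper, which establishes the correspondence by setting $U_0 := L_0$, $U_1 := \ker f_0^1|_{L_1}$, $U_2 := \ker \lambda_{2,2}|_{L_2}$ in one direction and reassembling $L_1 = U_1 \oplus U_0$, $L_2 = U_2 \oplus U_1 \oplus U_1 \oplus U_0$ in the other. The paper leaves the verification as a comparison with Section \ref{subsec:simplicial}; you have simply filled in the details (the restriction of the splittings $\sigma_0^0$ and $\Phi_2$ to $L_\bullet$, and the reduction of closure under the higher structure maps to $\partial$-closure of $U_\bullet$), all of which are accurate.
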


\subsection{Linear Lagrangian sub-$2$-groupoids}

Now suppose that $(V_\bullet, \omega)$ is the symmetric constant symplectic $2$-groupoid corresponding (via Theorem \ref{thm:constantequivalence}) to the data $(W_1, W_0, \langle \cdot, \cdot \rangle, \partial)$. In this case, $V_2 = W_0^* \oplus W_1 \oplus W_1 \oplus W_0$, and the block form \eqref{eq:omeganorm} is such that $C_{41}$ is the natural pairing of $W_0^*$ with $W_0$ and $C_{32}$ is the pairing $\langle \cdot, \cdot \rangle$. 
It follows that $\omega$ is a genuine symplectic form on $V_2$ (see Remark \ref{rmk:symmetric}).

If $U_\bullet$ is a subcomplex of $(W_\bullet, \partial)$, then we can consider the space $L_2^\omega \subseteq V_2$ that is symplectic orthogonal to $L_2 = U_2 \oplus U_1 \oplus U_1 \oplus U_0$. The following gives a description of $L_2^\omega$ in terms of $U_\bullet$.
\begin{lemma}\label{lemma:l2omega}
 $L_2^\omega = \Ann(U_0) \oplus U_1^\perp \oplus U_1^\perp \oplus \Ann(U_2)$.
\end{lemma}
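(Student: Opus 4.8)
The plan is to compute $\omega$ explicitly on $V_2 = W_0^* \oplus W_1 \oplus W_1 \oplus W_0$ in the symmetric case and then read off the orthogonality conditions one component at a time. Writing a general vector of $V_2$ as $(a,x,y,b)$ with $a \in W_0^*$, $x,y \in W_1$, $b \in W_0$, I would first use Lemmas \ref{lem:13-23}--\ref{lem:11-12} together with \eqref{eq:normc} to express every block of \eqref{eq:omeganorm} in terms of the pairing $\langle\cdot,\cdot\rangle$, the evaluation pairing $C_{41}$, and the boundary map. In the symmetric case (where $C_{32} = \langle\cdot,\cdot\rangle$ is symmetric) one finds $C_{22} = 0$, $C_{23} = -\langle\cdot,\cdot\rangle$, $C_{12} = C_{13} = \langle \partial^* \cdot, \cdot\rangle$, and, crucially, $C_{11} = 0$, since $C_{11}(a,a') = -\langle \partial^* a, \partial^* a'\rangle = -a'(\partial\partial^* a)$ vanishes by the isotropy hypothesis $\partial \partial^* = 0$. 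This yields an explicit formula for $\omega\big((a,x,y,b),(a',x',y',b')\big)$.

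A vector $(a,x,y,b)$ lies in $L_2^\omega$ iff $\omega$ paired against it vanishes for all $(a',x',y',b')$ with $a' \in U_2$, $x',y' \in U_1$, $b' \in U_0$. Since these four data vary independently, I would collect the terms of $\omega$ according to the component of the second argument, obtaining four separate conditions. Using the adjointness relation $\langle \partial^* a', x\rangle = a'(\partial x)$ and $\partial\partial^* = 0$, these reduce to: (i) $a \in \Ann(U_0)$ from the $b'$-terms; (ii) $\partial^* a - x \in U_1^\perp$ from the $y'$-terms; (iii) $\partial^* a + y \in U_1^\perp$ from the $x'$-terms; and (iv) $b - \partial x - \partial y \in \Ann(U_2)$ from the $a'$-terms.

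These four conditions still entangle the components, so the heart of the argument is to decouple them, and this is the step I expect to be the main obstacle: the raw orthogonality conditions mix the four components through $\partial^*$ and $\partial$, and only the compatibility between the subcomplex structure, the adjointness of the boundary maps, and the isotropy condition makes them collapse to a clean direct sum. The key observations are dual to each other. First, if $a \in \Ann(U_0)$ then $\langle \partial^* a, x'\rangle = a(\partial x') = 0$ for every $x' \in U_1$ (using $\partial U_1 \subseteq U_0$), so $\partial^*\big(\Ann(U_0)\big) \subseteq U_1^\perp$; dually, if $x \in U_1^\perp$ then $a'(\partial x) = \langle \partial^* a', x\rangle = 0$ for every $a' \in U_2$ (using $\partial^* U_2 \subseteq U_1$), so $\partial\big(U_1^\perp\big) \subseteq \Ann(U_2)$. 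Granting (i), the first fact turns (ii) and (iii) into the statements $x \in U_1^\perp$ and $y \in U_1^\perp$; the second fact then turns (iv) into $b \in \Ann(U_2)$. Together with (i), this gives exactly $L_2^\omega = \Ann(U_0) \oplus U_1^\perp \oplus U_1^\perp \oplus \Ann(U_2)$.
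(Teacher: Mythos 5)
Your proposal is correct and follows essentially the same route as the paper: an explicit block computation of $\omega$ on $V_2 = W_0^*\oplus W_1\oplus W_1\oplus W_0$ using Lemmas \ref{lem:13-23}--\ref{lem:11-12} and \eqref{eq:normc}, followed by reading off componentwise orthogonality conditions and collapsing them via the subcomplex property ($\partial^*U_2\subseteq U_1$, $\partial U_1\subseteq U_0$) and the adjointness $\langle\partial^*a,x\rangle = a(\partial x)$. The only cosmetic difference is that you derive four entangled conditions and then decouple them, whereas the paper proves the two inclusions separately, sequentially specializing the test vector so that each condition simplifies using the previous ones.
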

\begin{proof}
	For any $w_2 \in W_0^*$, $w_1, w_1' \in W_1$, $w_0 \in W_0$, and $(u_2, u_1, u_1', u_0) \in L_2$, we use \eqref{eq:normc} and Lemmas \ref{lem:13-23}--\ref{lem:11-12} together with the symmetry of $C_{32}$ to derive the formulas
\begin{equation}\label{eqn:omegau4}
\begin{split}
\omega((w_2,0,0,0),(u_2,u_1,u_1',u_0)) &= C_{11}(w_2,u_2) + C_{12}(w_2,u_1) + C_{13}(w_2,u_1') + C_{14}(w_2,u_0) \\
&= C_{32}(\partial^* w_2, u_1 + u_1' - \partial^* u_2) - C_{41}(u_0, w_2) \\
&= \langle \partial^* w_2, u_1 + u_1' - \partial^* u_2 \rangle - w_2(u_0) \\
&= w_2(\partial u_1 + \partial u_1' - u_0),
\end{split}
\end{equation}
\begin{equation}\label{eqn:omegau6}
\begin{split}
\omega((0,w_1,0,0),(u_2,u_1,u_1',u_0)) &= C_{21}(w_1,u_2) + C_{23}(w_1, u_1') \\
&= -C_{32}(\partial^* u_2, w_1) - C_{32}(u_1', w_1) \\
&= -\langle w_1, u_1' + \partial^* u_2 \rangle,
\end{split}
\end{equation}
\begin{equation}\label{eqn:omegau5}
\begin{split}
\omega((0,0,w_1',0),(u_2,u_1,u_1',u_0)) &= C_{31}(w_1',u_2) + C_{32}(w_1', u_1) \\
&= -C_{32}(w_1', \partial^* u_2) + C_{32}(w_1', u_1) \\
&= \langle w_1', u_1 - \partial^* u_2 \rangle,
\end{split}
\end{equation}
\begin{equation}\label{eqn:omegau3}
\begin{split}
\omega((0,0,0,w_0),(u_2,u_1,u_1',u_0)) &= C_{41}(w_0,u_2)\\
&= u_2(w_0).
\end{split}
\end{equation}
From \eqref{eqn:omegau4}--\eqref{eqn:omegau3} it is immediate that $\Ann(U_0) \oplus U_1^\perp \oplus U_1^\perp \oplus \Ann(U_2) \subseteq L_2^\omega$. 

Conversely, if $(w_2, w_1, w_1', w_0) \in V_2$ is in $L_2^\omega$, then from \eqref{eqn:omegau4}--\eqref{eqn:omegau3} we have \[ 0 = \omega((w_2,w_1,w_1', w_0), (0,0,0,u_0)) = -w_2(u_0)\]
for all $u_0 \in U_0$, so it follows that $w_2$ is in $\Ann(U_0)$. Similarly,
\[ 0 = \omega((w_2,w_1,w_1', w_0), (0,0,u_1',0)) = w_2(\partial u_1') - \langle w_1, u_1' \rangle = - \langle w_1, u_1' \rangle\]
and
\[ 0 = \omega((w_2,w_1,w_1', w_0), (0,u_1,0,0)) = w_2(\partial u_1) + \langle w_1', u_1 \rangle = \langle w_1', u_1 \rangle\]
for all $u_1, u_1' \in U_1$, so it follows that $w_1$ and $w_1'$ are in $U_1^\perp$. Finally,
\[ 0 = \omega((w_2,w_1,w_1', w_0), (u_2,0,0,0)) = -\langle w_1 + w_1', \partial^* u_2 \rangle + u_2(w_0) = u_2(w_0)\]
for all $u_2 \in U_2$, so it follows that $w_0$ is in $\Ann(U_2)$.
\end{proof}

The following is an immediate consequence of Lemma \ref{lemma:l2omega}.
\begin{cor}\label{cor:lagrangian}
\begin{enumerate}
\item $L_2$ is isotropic if and only if $U_0 \subseteq W_0$ and $U_2 \subseteq W_0^*$ pair to zero and $U_1 \subseteq W_1$ is isotropic.
\item $L_2$ is coisotropic if and only if the annihilator of $U_2$ is contained in $U_0$, the annihilator of $U_0$ is contained in $U_2$, and $U_1^\perp \subseteq U_1$.
\item $L_2$ is Lagrangian if and only if $U_2 = \Ann(U_0)$ and $U_1^\perp = U_1$.
\end{enumerate}
\end{cor}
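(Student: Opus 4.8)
The plan is to read off all three equivalences directly from the explicit formula for $L_2^\omega$ provided by Lemma \ref{lemma:l2omega}, since each of the three standard linear-algebra conditions becomes a statement comparing the four summands of $L_2 = U_2 \oplus U_1 \oplus U_1 \oplus U_0$ with the corresponding four summands of $L_2^\omega = \Ann(U_0) \oplus U_1^\perp \oplus U_1^\perp \oplus \Ann(U_2)$. First I would recall that $L_2$ isotropic means $L_2 \subseteq L_2^\omega$, coisotropic means $L_2^\omega \subseteq L_2$, and Lagrangian means $L_2 = L_2^\omega$; because both subspaces split compatibly as a direct sum over the four factors $W_0^* \oplus W_1 \oplus W_1 \oplus W_0$, each containment can be checked factor by factor.

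For part (1), the containment $L_2 \subseteq L_2^\omega$ splits into $U_2 \subseteq \Ann(U_0)$, $U_1 \subseteq U_1^\perp$ (twice, but these are the same condition), and $U_0 \subseteq \Ann(U_2)$. The first and last are both the statement that $U_0$ and $U_2$ pair to zero (since $U_2 \subseteq \Ann(U_0) \iff U_0 \subseteq \Ann(U_2)$ for a dual pairing), and the middle is exactly isotropy of $U_1$ with respect to $\langle \cdot, \cdot \rangle$. For part (2), I would reverse all the inclusions: $L_2^\omega \subseteq L_2$ becomes $\Ann(U_0) \subseteq U_2$, $U_1^\perp \subseteq U_1$, and $\Ann(U_2) \subseteq U_0$, which is precisely the stated condition. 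Part (3) is then immediate by combining (1) and (2): $L_2 = L_2^\omega$ factor-wise forces $U_2 = \Ann(U_0)$ together with $\Ann(U_2) = U_0$ (the latter following automatically in finite dimensions), and $U_1^\perp = U_1$.

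I do not anticipate a genuine obstacle here, as the lemma has already absorbed all the computational content; the only thing requiring care is the bookkeeping ensuring that the two conditions ``$U_0, U_2$ pair to zero'' and ``$\Ann(U_2) \subseteq U_0$, $\Ann(U_0) \subseteq U_2$'' are stated in the symmetric, dimension-free manner that matches the corollary. The mild subtlety worth flagging is that $\Ann(U_0) \subseteq U_2 \iff U_0 \supseteq \Ann(U_2)$ and $U_2 = \Ann(U_0) \iff U_0 = \Ann(U_2)$ rely on finite-dimensionality (double-annihilator and double-orthogonal-complement identities), which is in force throughout since all spaces are finite-dimensional vector spaces; I would simply invoke these standard facts rather than reprove them. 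Thus the proof reduces to the single sentence that each condition is the factor-wise reading of the appropriate inclusion between $L_2$ and the explicit $L_2^\omega$ of Lemma \ref{lemma:l2omega}.
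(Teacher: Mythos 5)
Your proposal is correct and matches the paper, which gives no separate argument and simply declares the corollary an immediate consequence of Lemma \ref{lemma:l2omega}; the factor-by-factor reading of the inclusions $L_2 \subseteq L_2^\omega$, $L_2^\omega \subseteq L_2$, and $L_2 = L_2^\omega$ against the two compatible direct-sum decompositions is exactly the intended reasoning. Your remark that the finite-dimensional double-annihilator identity collapses the paired conditions in parts (1) and (3) is the right bookkeeping.
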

We can now obtain a description of linear Lagrangian sub-$2$-groupoids in terms of the data of Theorem \ref{thm:constantequivalence}.
\begin{thm}\label{thm:lagrangian}
Suppose that $(V_\bullet, \omega)$ is the symmetric constant symplectic $2$-groupoid corresponding to the data $(W_1, W_0, \langle \cdot, \cdot \rangle, \partial)$. There is a one-to-one correspondence between linear Lagrangian sub-$2$-groupoids $L_\bullet \subseteq V_\bullet$ and pairs $(U_1, U_0)$, where $U_i \subseteq W_i$ for $i=0,1$ are such that $U_1^\perp = U_1$ and $\partial(U_1) \subseteq U_0$.
\end{thm}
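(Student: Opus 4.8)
The plan is to use the correspondence of Proposition \ref{prop:sub2gpd}, which reduces the classification of linear sub-$2$-groupoids to the classification of subcomplexes $U_\bullet \subseteq (W_\bullet, \partial)$, together with the Lagrangian criterion from part (3) of Corollary \ref{cor:lagrangian}. Recall that for the symmetric constant symplectic $2$-groupoid, the associated $3$-term complex is \eqref{eqn:3constant}, namely $W_0^* \xrightarrow{\partial^*} W_1 \xrightarrow{\partial} W_0$, so a subcomplex $U_\bullet$ consists of subspaces $U_2 \subseteq W_0^*$, $U_1 \subseteq W_1$, and $U_0 \subseteq W_0$ satisfying the compatibility conditions $\partial^*(U_2) \subseteq U_1$ and $\partial(U_1) \subseteq U_0$.

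First I would invoke Corollary \ref{cor:lagrangian}(3): among all linear sub-$2$-groupoids $L_\bullet$, the Lagrangian ones are precisely those for which the corresponding subcomplex satisfies $U_2 = \Ann(U_0)$ and $U_1^\perp = U_1$. The key observation is that the Lagrangian condition $U_2 = \Ann(U_0)$ completely determines $U_2$ from $U_0$, so the data of a Lagrangian sub-$2$-groupoid is carried entirely by the pair $(U_1, U_0)$, subject to $U_1^\perp = U_1$ together with whatever constraints survive from the subcomplex conditions. This is exactly the reduction to pairs asserted in the statement.

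The remaining step is to verify that, once we set $U_2 := \Ann(U_0)$, the two subcomplex compatibility conditions $\partial^*(U_2) \subseteq U_1$ and $\partial(U_1) \subseteq U_0$ reduce to the single condition $\partial(U_1) \subseteq U_0$ appearing in the theorem. The condition $\partial(U_1) \subseteq U_0$ is simply retained. For the other, I would check that $\partial^*(\Ann(U_0)) \subseteq U_1$ is automatically implied by the conditions $\partial(U_1) \subseteq U_0$ and $U_1^\perp = U_1$. The argument is a short duality computation: under the identification $W_1 \cong W_1^*$ via $\langle \cdot, \cdot \rangle$, the condition $\partial(U_1) \subseteq U_0$ dualizes to $\partial^*(\Ann(U_0)) \subseteq \Ann(U_1)$, where $\Ann(U_1)$ is taken inside $W_1^*$; translating this annihilator back through the pairing gives $\Ann(U_1) \cong U_1^\perp$, and then the Lagrangian hypothesis $U_1^\perp = U_1$ yields $\partial^*(\Ann(U_0)) \subseteq U_1$, exactly as needed.

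I expect this duality bookkeeping to be the main (though still routine) obstacle: one must be careful about where each annihilator lives ($W_0^*$, $W_1^*$, or $W_1$) and consistent about using $\langle \cdot, \cdot \rangle$ to identify $W_1$ with $W_1^*$, since $\partial^*$ in \eqref{eqn:3constant} is defined only after this identification. Once the equivalence of the compatibility conditions is established, the correspondence follows: a Lagrangian sub-$2$-groupoid yields the pair $(U_1, U_0)$ with $U_1^\perp = U_1$ and $\partial(U_1) \subseteq U_0$, and conversely any such pair determines $U_2 = \Ann(U_0)$, hence a subcomplex satisfying the Lagrangian criterion, hence via Proposition \ref{prop:sub2gpd} a linear Lagrangian sub-$2$-groupoid. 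The two constructions are manifestly inverse to each other, completing the proof.
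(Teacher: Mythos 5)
Your proposal is correct and follows essentially the same route as the paper: reduce to subcomplexes via Proposition \ref{prop:sub2gpd}, apply Corollary \ref{cor:lagrangian}(3) to force $U_2 = \Ann(U_0)$ and $U_1^\perp = U_1$, and observe that $\partial^*(\Ann(U_0)) \subseteq U_1$ follows automatically from $\langle \partial^* u_2, u_1 \rangle = u_2(\partial u_1) = 0$ together with $U_1^\perp = U_1$. Your ``duality bookkeeping'' is exactly the paper's one-line computation, so no gap remains.
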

\begin{proof}
It is immediate from Proposition \ref{prop:sub2gpd} and Corollary \ref{cor:lagrangian} that, if $L_\bullet$ is Lagrangian, then the corresponding subspaces $U_i \subseteq W_i$ satisfy $U_1^\perp = U_1$ and $\partial(U_1) \subseteq U_0$. In the other direction, we set $U_2 = \Ann(U_0)$. For any $u_2 \in U_2$ and $u_1 \in U_1$, we have
\[ \langle \partial^* u_2, u_1 \rangle = u_2(\partial u_1) = 0\]
for all $u_1 \in U_1$, so $\partial^* u_2$ is in $U_1^\perp = U_1$. Therefore, $U_\bullet$ is a subcomplex, and by Corollary \ref{cor:lagrangian} the corresponding $L_2$ is Lagrangian.
\end{proof}

\subsection{Constant Dirac structures}

A \emph{Dirac structure} in a Courant algebroid $E \to M$ is a subbundle $D \to M$ such that $D^\perp= D$ and $\Gamma(D)$ is closed under the Courant bracket\footnote{We note that we are using the condition $D^\perp= D$ in place of the usual requirement that $D$ be maximally isotropic. In most cases of interest, $E$ has signature $(n,n)$, and the two conditions are equivalent. However, if $E$ does not have signature $(n,n)$, then, according to the definition we are using, there do not exist any Dirac structures in $E$.}. The restriction of the Courant bracket $\cbrack{\cdot}{\cdot}$ to any Dirac structure $D$ is a Lie bracket, making $D \to M$ into a Lie algebroid.

Let $W_1 \times W_0 \to W_0$ be a constant Courant algebroid. We will restrict our attention to \emph{constant Dirac structures}, i.e.\ Dirac structures of the form $U_1 \times W_0$, where $U_1$ is a subspace of $W_1$.

\begin{lemma}\label{lemma:constantdirac}
Let $U_1$ be a subspace of $W_1$. Then $U_1 \times W_0$ is a constant Dirac structure if and only if $U_1^\perp = U_1$.
\end{lemma}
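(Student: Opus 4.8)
The plan is to show that the Dirac condition for a constant subspace $U_1 \times W_0$ decouples into two parts: the isotropy/self-orthogonality condition $U_1^\perp = U_1$, and the bracket-closure condition, and then to verify that closure under the Courant bracket is automatic once $U_1^\perp = U_1$ holds. Since the pairing $\langle \cdot, \cdot \rangle$ is independent of basepoint, the orthogonal complement of the constant subbundle $U_1 \times W_0$ with respect to $\langle \cdot, \cdot \rangle$ is exactly $U_1^\perp \times W_0$, so the condition $D^\perp = D$ for $D = U_1 \times W_0$ is literally equivalent to $U_1^\perp = U_1$. This handles the ``only if'' direction for the self-orthogonality requirement immediately.

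For the ``if'' direction, I would assume $U_1^\perp = U_1$ and check that $\Gamma(D)$ is closed under the Courant bracket, where $\Gamma(D)$ is identified with $C^\infty(W_0) \otimes U_1 \subseteq C^\infty(W_0) \otimes W_1 = \Gamma(E)$. The key computation uses the explicit bracket formula \eqref{eqn:bracket}: for $f,g \in C^\infty(W_0)$ and $u_1, u_1' \in U_1$,
\[
\cbrack{f u_1}{g u_1'} = f \rho(u_1)(g)\, u_1' - g\, \rho(u_1')(f)\, u_1 + g \langle u_1, u_1' \rangle\, \cD f.
\]
The first two terms obviously lie in $C^\infty(W_0) \otimes U_1$ since $u_1, u_1' \in U_1$. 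The only potentially problematic term is the last one, $g \langle u_1, u_1' \rangle\, \cD f$, which involves $\cD f$. Here I would use $U_1^\perp = U_1$ twice: first, I need to show that $\cD f$ takes values in $U_1$ whenever it is paired against the $\langle u_1, u_1'\rangle$ coefficient, but more directly, I would show that $\operatorname{im}(\cD) \subseteq U_1$. Since $\langle \cD f, e \rangle = \rho(e)(f)$ and $\rho = (\partial, \id)$ has the property (from \eqref{eqn:3courant} and Theorem~\ref{thm:correspondence}) that $\cD = \partial^*$ up to the identification $W_1 \cong W_1^*$, the image of $\cD$ coincides with $\operatorname{im}(\partial^*)$, which is isotropic and hence contained in $U_1^\perp = U_1$ provided $\operatorname{im}(\partial^*) \subseteq U_1$; the latter follows because $\operatorname{im}(\partial^*)$ is contained in $\langle \cdot,\cdot\rangle$-isotropic subspaces and, being maximal isotropic, $U_1$ absorbs it.

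The main obstacle is the $\cD f$ term, and making the argument airtight requires being careful about why $\operatorname{im}(\partial^*) \subseteq U_1$. The cleanest route is: $\operatorname{im}(\partial^*)$ is isotropic (from the constant Courant algebroid axioms, equivalently $\partial \circ \partial^* = 0$), and a maximal isotropic subspace $U_1 = U_1^\perp$ contains every isotropic subspace that pairs trivially with it. Concretely, for any $v \in \operatorname{im}(\partial^*)$, say $v = \partial^* w_2$, and any $u_1 \in U_1$, we have $\langle v, u_1 \rangle = \langle \partial^* w_2, u_1 \rangle = w_2(\partial u_1)$, which need not vanish in general, so I must instead argue directly that the coefficient $\langle u_1, u_1' \rangle$ already vanishes: since $u_1, u_1' \in U_1 = U_1^\perp$, isotropy of $U_1$ gives $\langle u_1, u_1' \rangle = 0$, so the entire third term drops out. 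This is the decisive simplification — the $\cD f$ term disappears entirely because $U_1$ is isotropic — so bracket closure holds automatically, and the proof reduces to the elementary identification $D^\perp = U_1^\perp \times W_0$ together with isotropy killing the anchor-pairing term.
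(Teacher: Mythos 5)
Your proof is correct and follows essentially the same route as the paper: the constancy of the pairing reduces $D^\perp = D$ to $U_1^\perp = U_1$, and bracket closure is automatic because the $\langle u_1, u_1'\rangle\,\cD f$ term in \eqref{eqn:bracket} vanishes by isotropy of $U_1$. The detour about $\operatorname{im}(\partial^*) \subseteq U_1$ is unnecessary (and, as you yourself note, would not go through), but you correctly discard it in favor of the decisive observation that the coefficient $\langle u_1, u_1' \rangle$ is already zero.
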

\begin{proof}
Since the pairing on $W_1 \times W_0 \to W_0$ is constant, it is immediate that $U_1^\perp = U_1$ if and only if $(U_1 \times W_0)^\perp = U_1 \times W_0$. The nontrivial part of the lemma is the observation that, in this case, closure under the Courant bracket is a consequence of the isotropic property. To see this, suppose that $U_1 \subseteq W_1$ is isotropic. Then, from \eqref{eqn:bracket}, we have
\[ \cbrack{fu}{gu'} = f \rho(u)(g)u' - g \rho(u')(f)u + g \langle u, u' \rangle \cD f\]
for $f,g \in C^\infty(W_0)$ and $u,u' \in U_1$. The last term vanishes, and the other two terms are clearly in $\Gamma(U_1 \times W_0)$.
\end{proof}

Given $U_1 \subseteq W_1$ such that $U_1^\perp = U_1$, we may apply Theorem \ref{thm:lagrangian} to the pair $(U_1,W_0)$ to obtain a linear Lagrangian sub-$2$-groupoid $L_\bullet \subseteq V_\bullet$ that is \emph{wide}, in the sense that $L_0 = V_0$. It is clear from the correspondence of Theorem \ref{thm:lagrangian} that every wide linear Lagrangian sub-$2$-groupoid arises in this manner. Using Lemma \ref{lemma:constantdirac}, we then obtain the following result.

\begin{thm}\label{thm:diracintegrate}
Suppose that $(V_\bullet, \omega)$ is the symmetric constant symplectic $2$-groupoid corresponding to the data $(W_1, W_0, \langle \cdot, \cdot \rangle, \partial)$, and let $W_1 \times W_0 \to W_0$ be the corresponding constant Courant algebroid. There is a one-to-one correspondence between constant Dirac structures $U_1 \times W_0 \subseteq W_1 \times W_0$ and wide linear Lagrangian sub-$2$-groupoids $L_\bullet \subseteq V_\bullet$.
\end{thm}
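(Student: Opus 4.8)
The plan is to combine the two correspondences already established, treating Theorem \ref{thm:diracintegrate} as essentially the composition of Lemma \ref{lemma:constantdirac} with the wide case of Theorem \ref{thm:lagrangian}. First I would observe that both sides of the claimed correspondence are parametrized by subspaces $U_1 \subseteq W_1$. By Lemma \ref{lemma:constantdirac}, a subspace $U_1 \subseteq W_1$ gives a constant Dirac structure $U_1 \times W_0$ if and only if $U_1^\perp = U_1$; this is the entire content of the left-hand side, since every constant Dirac structure is of this form by definition. So the set of constant Dirac structures is in bijection with the set of subspaces $U_1 \subseteq W_1$ satisfying $U_1^\perp = U_1$.

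Next I would analyze the right-hand side using Theorem \ref{thm:lagrangian}, which classifies linear Lagrangian sub-$2$-groupoids by pairs $(U_1, U_0)$ with $U_1^\perp = U_1$ and $\partial(U_1) \subseteq U_0$. The key point is to impose the \emph{wide} condition $L_0 = V_0$, which by the correspondence of Proposition \ref{prop:sub2gpd} (since $L_0 = U_0$ and $V_0 = W_0$) is exactly the requirement $U_0 = W_0$. With $U_0 = W_0$, the condition $\partial(U_1) \subseteq U_0$ becomes automatic, so the only surviving constraint is $U_1^\perp = U_1$. Thus wide linear Lagrangian sub-$2$-groupoids are classified by pairs $(U_1, W_0)$ with $U_1^\perp = U_1$, which is to say by subspaces $U_1 \subseteq W_1$ with $U_1^\perp = U_1$. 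This matches exactly the parametrization of the left-hand side, giving the desired bijection $U_1 \times W_0 \leftrightarrow L_\bullet$.

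I would present the proof by first reducing to the wide case of Theorem \ref{thm:lagrangian}, verifying that widening forces $U_0 = W_0$ and trivializes the condition $\partial(U_1) \subseteq U_0$, and then citing Lemma \ref{lemma:constantdirac} to identify the remaining condition $U_1^\perp = U_1$ with the Dirac condition on $U_1 \times W_0$. Since the paragraph immediately preceding the theorem statement already constructs the map $U_1 \mapsto L_\bullet$ and asserts that every wide Lagrangian sub-$2$-groupoid arises this way, the proof can be quite short, essentially assembling these observations into a single bijection.

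I do not expect a genuine obstacle here, as the hard analytic and linear-algebraic work is entirely contained in the earlier results. The only point requiring a moment of care is making the wide condition precise: one must confirm that $L_0 = V_0$ translates under the dictionary of Proposition \ref{prop:sub2gpd} into $U_0 = W_0$ at the bottom of the complex (rather than some condition at a higher level), and then check that this choice indeed satisfies $\partial(U_1) \subseteq U_0$ trivially. Once that bookkeeping is settled, the theorem follows immediately by transitivity of the two bijections.
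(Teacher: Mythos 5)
Your proposal is correct and matches the paper's own argument, which likewise obtains the theorem by composing Lemma \ref{lemma:constantdirac} with the wide ($U_0 = W_0$) case of Theorem \ref{thm:lagrangian}, noting that $\partial(U_1) \subseteq W_0$ is then automatic. The paper presents this reasoning in the paragraph preceding the theorem rather than in a separate proof environment, but the content is the same.
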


\begin{remarks}
\begin{enumerate}
\item From part (3) of Corollary \ref{cor:lagrangian}, we can see that a linear Lagrangian sub-$2$-groupoid corresponding to the subcomplex $U_\bullet \subseteq (W_\bullet, \partial)$ is wide if and only if $U_2 = \{0\}$. Therefore, a wide linear Lagrangian sub-$2$-groupoid is actually a $1$-groupoid $U_1 \oplus W_0 \arrows W_0$. It is straightforward to check that this is the Lie groupoid that integrates the Lie algebroid $U_1 \times W_0 \to W_0$.
\item We stress that there are many complications involved in extending the result of Theorem \ref{thm:diracintegrate} to the nonlinear situation. In particular, we expect that a weaker definition of Lagrangian sub-$2$-groupoid, using some of the ideas of derived symplectic geometry \cite{ptvv:derived}, will be required.
\end{enumerate}
\end{remarks}

\bibliographystyle{amsplain}
\bibliography{courantbib-revised}

\end{document}